\newcommand{\virgolette}[1]{``#1''}
\newtheorem{teorema}{Theorem}[section]
\newtheorem{coro}[teorema]{Corollary}
\newtheorem{lemma}[teorema]{Lemma}
\newtheorem{prop}[teorema]{Proposition}
\newtheorem{osss}[teorema]{Remark}
\newtheorem*{oss}{Remark}
\theoremstyle{definition}
\newtheorem{defi}[teorema]{Definition}
\theoremstyle{remark}
\newtheorem*{assumptionI}{\bf Assumptions I}
\newtheorem*{assumptionII}{\bf Assumptions II}
\newcommand{\iii}{{\, \vert\kern-0.25ex\vert\kern-0.25ex\vert\, }}
\newcommand{\ffi}{\varphi}
\newcommand{\AL}{\mathcal{A}}
\newcommand{\LL}{\mathcal{L}}
\newcommand{\MM}{\mathcal{M}}
\newcommand{\NN}{\mathcal{N}}
\newcommand{\KK}{\mathcal{K}}
\newcommand{\Di}{\mathcal{D}}
\newcommand{\N}{\mathbb{N}}
\newcommand{\R}{\mathbb{R}}
\newcommand{\C}{\mathbb{C}}
\newcommand{\Hi}{\mathscr{H}}
\newcommand{\Gi}{\mathscr{G}}
\newcommand{\dd}{\partial}
\newcommand{\ra}{\rangle}
\newcommand{\la}{\langle}
\newcommand{\G}{\Gamma}
\newcommand{\Ti}{\mathcal{T}}
\begin{document}

\date{}
\title[Bilinear control]{Controllability of localized quantum states on infinite graphs through bilinear control fields}

\author{Ka\"{\i}s Ammari}
\address{UR Analysis and Control of PDEs, UR 13ES64, Department of Mathematics, Faculty of Sciences of Monastir, University of Monastir, Tunisia}
\email{kais.ammari@fsm.rnu.tn} 

\author{Alessandro Duca}
\address{Institut Fourier, Université Grenoble Alpes, 100 Rue des Mathématiques, 38610 Gières, France} 
\email{alessandro.duca@unito.it}

\begin{abstract}
In this work, we consider the bilinear Schr\"odinger equation (\ref{mainx1}) $i\dd_t\psi=-\Delta\psi+u(t)B\psi$ in the Hilbert space $L^2(\Gi,\C)$ with $\Gi$ an infinite graph. The Laplacian $-\Delta$ is equipped with self-adjoint boundary conditions, $B$ is a bounded symmetric operator and $u\in L^2((0,T),\R)$ with $T>0$. We study the well-posedness of the (\ref{mainx1}) in suitable subspaces of $D(|\Delta|^{3/2})$ preserved by the dynamics despite the dispersive behaviour of the equation. In such spaces, we study the global exact controllability and the {\virgolette{energetic controllability}}. We provide examples involving for instance infinite tadpole graphs.  
\end{abstract}

\subjclass[2010]{35Q40, 93B05, 93C05}
\keywords{Bilinear control, infinite graph}

\maketitle

{\centering \small{Online published https://doi.org/10.1080/00207179.2019.1680868}\\
\ \ \ \ \ \ \ \ In this version, the references of the preprint 
articles cited in the original work were updated.}
\tableofcontents
\newpage
\section{Introduction}
We study the evolution of a particle confined in an infinite graph structure and subjected to an external field that plays the role of a control. 

\begin{figure}[H]
	\centering
	\includegraphics[width=\textwidth-50pt]{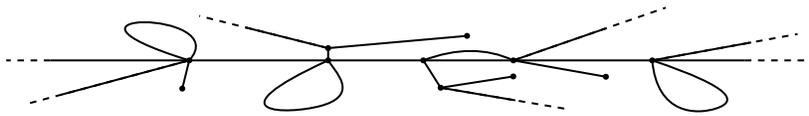}
	\caption{An infinite graph is an one-dimensional domain composed by vertices (points) connected by edges (segments and half-lines).}\label{fig:1}
\end{figure}

\noindent
Its dynamics is described by the so-called bilinear Schr\"odinger equation
\begin{equation}\label{bilinear} i\dd_t\psi(t)=(A+u(t)B)\psi(t),\ \ \ \ \ t\in(0,T),\end{equation} 
in $L^2(\Gi,\C)$, where $\Gi$ is the graph. The operator $A$ is a self-adjoint Laplacian, while the action of the controlling external field is given by the bounded symmetric operator $B$ and by the function $u$, which accounts its intensity. We call $\G_t^u$ the unitary propagator generated by $A+u(t)B$ (when it is defined).

%
%
%
%

\smallskip
It is natural to wonder whether, given any couple of states $\psi^1$ and $\psi^2$, there exists $u$ steering the bilinear quantum system from $\psi^1$ into $\psi^2$. The bilinear Schr\"odinger equation is said to be {\it exactly controllable} when the dynamics reach precisely the target. 

\noindent
We denote it {\it approximately controllable} when it is possible to approach the target as close as desired. If it is possible to control (either exactly, or approximately) more initial states at the same time with the same $u$, then the equation is said to be {\it simultaneously controllable}.

\smallskip

The controllability of finite-dimensional quantum systems ({\it i.e.} modeled by an ordinary differential equation) is currently well-established. If we consider the bilinear Schr\"odinger equation $(\ref{bilinear})$ in $\C^N$ such that $A$ and $B$ are $N\times N$ Hermitian matrices and $t\mapsto u(t)\in\R$ is the control, then the controllability of the problem is linked to the rank of the Lie algebra spanned by $A$ and $B$ (we refer to \cite{basso} by Altafini and 
\cite{corona} by Coron).
Nevertheless, the Lie algebra rank condition can not be used for infinite-dimensional quantum systems (see \cite{corona} for further details). Thus, different techniques were developed in order to deal with this type of problems.

\smallskip

Regarding the linear Schr\"odinger equation, the controllability and observability properties are reciprocally dual (often referred to the Hilbert Uniqueness Method). One can therefore address the control problem directly or by duality with various techniques: multiplier methods (\cite{lion}), 
microlocal analysis (\cite{rauch}), 
Carleman estimates 
(\cite{330}).

\smallskip

Even though the linear Schr\"odinger equation is widely studied in the literature, the bilinear Schr\"odinger equation in a generic Hilbert space $\Hi$ can not be approached with the same techniques since it is not exactly controllable in $\Hi$. We refer to the work on bilinear systems \cite{ball} by Ball, Mardsen and Slemrod, where the well-posedness and the non-controllability are provided. 
Despite they prove the well-posedness of the bilinear Schr\"odinger equation in $\Hi$ when $u\in L^1((0,T),\R)$ and $T>0$, they also show that it is not exactly controllable in $\Hi$ for $u\in L^2_{loc}((0,\infty),\R)$ (see \cite[Theorem\ 3.6]{ball}).

\smallskip

Because of the Ball, Mardsen and Slemrod result, many authors have considered weaker notions of controllability when $\Gi=(0,1)$. Let
$$D(A_D)=H^2((0,1),\C)\cap H^1_0((0,1),\C)),\ \ \ \ \ A_D\psi:=-\Delta\psi,\ \ \ \ \forall\psi\in D(A_D).$$

\noindent
In \cite{laurent}, Beauchard and Laurent prove the {\it well-posedness} and the {\it local exact controllability} of the bilinear Schr\"odinger equation in $H^{s}_{(0)}:=D(A_D^{s/2})$ for $s= 3$, when $B$ is a multiplication operator for suitable $\mu\in H^3((0,1),\R)$. 

\noindent
In \cite{morgane1}, Morancey proves the {\it simultaneous local exact controllability} of two or three (\ref{bilinear}) in $H^3_{(0)}$ for suitable operators $B=\mu\in H^3((0,1),\R)$.

\noindent
In \cite{morganerse2}, Morancey and Nersesyan extend the previous result. They achieve the {\it simultaneous global exact controllability} of finitely many (\ref{bilinear}) in $H^4_{(0)}$ for a wide class of multiplication operators $B=\mu$ with $\mu\in H^4((0,1),\R)$.

\noindent
In \cite{mio1}, the author ensures the {\it simultaneous global exact controllability in projection} of infinite (\ref{bilinear}) in $H^3_{(0)}$ for bounded symmetric operators $B$.

\noindent
The author exhibits the {\it global exact controllability} of the bilinear Schr\"odinger equation between eigenstates via explicit controls and explicit times in \cite{mio2}.

\smallskip

The {\it global approximate controllability} of the bilinear Schr\"odinger equation is proved with many different techniques in literature as the following.
The outcome is achieved with Lyapunov techniques by Mirrahimi in \cite{milo} and by Nersesyan in \cite{nerse2}.
Adiabatic arguments are considered by Boscain, Chittaro, Gauthier, Mason, Rossi and Sigalotti in \cite{ugo2} and \cite{ugo3}.
Lie-Galerking methods are used by Boscain, Boussa\"id, Caponigro, Chambrion and Sigalotti in \cite{nabile} and \cite{ugo}.

\smallskip

Control problems involving networks have been very popular in the last decades, however the bilinear Schr\"odinger equation on compact graphs has been only studied in \cite{mio3} and \cite{mio4}. In the mentioned works, the {\it well-posedness} and the {\it global exact controllability} of the (\ref{bilinear}) are provided in some spaces $D(|A|^{s/2})$ with $s\geq 3$. In \cite{mio4}, another weaker result is introduced, the so-called {\it energetic controllability}. In particular, a bilinear quantum system is said to be energetically controllable with respect to some energy levels when there exist corresponding bounded states $\{\ffi\}_{j\in\N^*}$ such that$$\forall m,n\in\N^*,\ \exists T>0,\ u\in L^2((0,T),\R)\ \ :\ \ \ffi_n=\G_T^u\ffi_m.$$
The peculiarity of the bilinear Schr\"odinger equation on compact graphs is that, even though $A$ admits purely discrete spectrum $\{\lambda_k\}_{k\in\N^*}$ (see \cite[Theorem\ 18]{kuk}), the uniform gap condition $\inf_{{k\in\N^*}}|\lambda_{k+1}-\lambda_k|\geq 0$ is satisfied if and only if $\Gi=(0,1)$. This hypothesis is crucial for the classical arguments adopted in the previous works as \cite{laurent}, \cite{mio1}, \cite{mio2} and \cite{morgane1}. To this purpose, new techniques are developed in \cite{mio3} and \cite{mio4} in order to achieve controllability results.

\subsection{Novelties of the work}

Up to our knowledge, the controllability of the bilinear \\
Schr\"odinger equation on infinite graphs is still an open problem.
The main reason can be found on the dispersive phenomena characterizing the equation on infinite graphs (not considering the difficulties already appearing on compact graphs; see \cite{mio3} and \cite{mio4}). 
A characteristic feature of the Schr\"odinger equation is the loss of localization of the wave packets during the evolution, the dispersion. This effect can be measured by $L^\infty$-time decay, which implies a spreading out of the solutions, due to the time invariance of the $L^2$-norm. In \cite{AAN17}, Ali Mehmeti-Ammari-Nicaise prove that the free Schr\"odinger group on the tadpole graph satisfies the standard $L^1 - L^\infty$ dispersive estimate and that it is independent of the length of the circle (compact part of the graph) (see also \cite[Ali Mehmeti-Ammari-Nicaise]{AAN15} for the case of the star-shaped network and with potential). The proof of this result is based on an appropriate decomposition of the kernel of the resolvent. This technique gives a full characterization of the spectrum made of the point spectrum and of the absolutely continuous one, while the singular continuous spectrum is empty. 

\medskip

Our strategy can be resumed as follows.

\begin{itemize} \item When $A$ has discrete spectrum, we construct some eigenfunctions of $A$ in $L^2(\Gi,\C)$ denoted $\{\ffi_k\}_{k\in\N^*}$. The flow of the Schr\"odinger equation $i\dd_t\psi=A\psi$ preserves $$\widetilde \Hi=\overline{span\{\ffi_k:\ k\in\N^*\}}^{ L^2}.$$ 
	\item When $B$ stabilizes the space $\widetilde\Hi,$ the bilinear Schr\"odinger equation is {\it well-posed} in $\widetilde \Hi$ and in $D(|A|^\frac{s}{2})\cap\widetilde\Hi$ for suitable $s>0$ when $B$ is sufficiently regular. 
	\item In such space, we study the {\it global exact controllability} and the {\it energetic controllability} with respect to $\{\ffi_k\}_{k\in\N^*}$ by adapting the techniques developed for the compact graphs in \cite{mio3} and \cite{mio4}.
\end{itemize}

In the first part of the work, we consider a specific $B$ localized on the \virgolette{head} of an infinite tadpole $\Gi$. The chosen $B$ is symmetric with respect to the natural symmetry axis $r$ of $\Gi$ and we denote $\widetilde \Hi$ the space of those $L^2(\Gi,\C)$-functions that are antisymmetric with respect to $r$ (see Figure \ref{tadpole}). We prove the {global exact controllability} in $D(|A|^\frac{3}{2})\cap\widetilde\Hi$.

In the second part, we generalize the results for generic graphs and we apply them for those $\Gi$ containing a star graph (Section $\ref{exampleNOTtadpole}$). 
	
	\begin{figure}[H]
		\centering
		\includegraphics[width=\textwidth-50pt]{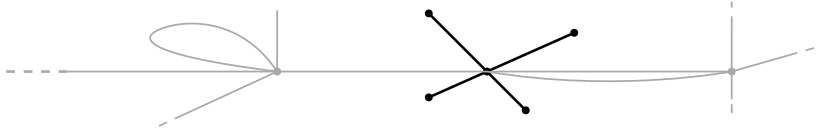}
		\caption{Graph described in Section $\ref{exampleNOTtadpole}$.}
	\end{figure}
In presence of suitable substructures in an infinite graph $\Gi$, it is possible to construct eigenfunctions of $A$. For instance, when $\Gi$ contains a self-closing edge $e$ of length $1$, the functions  $$\{\ffi_k\}_{k\in\N^*}\ \ \ \ \ :\ \ \ \ \ffi_k\big|_{e}=\sqrt{{2}}\sin\big({2k\pi x }\big),\ \ \ \ \ \ \ \ \ \ffi_k\big|_{\Gi\setminus \{e\}}\equiv 0,\ \ \ \ \ \ \ \forall k\in\N^*,$$ are eigenfunctions of $A$. If $B$ preserves the span of $\{\ffi_k\}_{k\in\N^*}$, 
then the controllability could be achieved. 
The same argument is true for graphs containing more self-closing edges or other suitable substructures (see Remark $\ref{generic}$ for few examples).

\section{Infinite tadpole graph}
Let $\Ti$ be an {\it infinite tadpole graph} composed by two edges $e_1$ and $e_2$. The self-closing edge $e_1$, the \virgolette{head}, is connected to $e_2$ in the vertex $v$ and it is parametrized in the clockwise direction with a coordinate going from $0$ to $1$ (the length of $e_1$). The \virgolette{tail} $e_2$ is a half-line equipped with a coordinate starting from $0$ in $v$ and going to $+\infty$.
\begin{figure}[H]
	\centering
	\includegraphics[width=\textwidth-100pt]{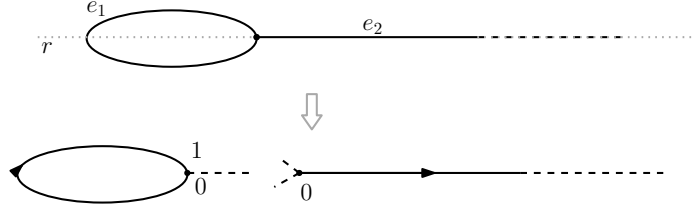}
	\caption{The parametrization of the infinite tadpole graph and its symmetry axis $r$.}
    \label{tadpole}
\end{figure}

\noindent
We consider $\Ti$ as domain of functions $f:=(f^1,f^2):\Ti\rightarrow \C$, such that $f^j:e_j\rightarrow \C$ with $j=1,2$. Let $\Hi=L^2(\Ti,\C)$ be the Hilbert space equipped with the norm $\|\cdot\|$ induced by the scalar product
$$\la\psi,\ffi\ra:=\la\psi,\ffi\ra_{\Hi}=\int_{e_1}\overline{\psi^1}(x)\ffi^1(x)dx+\int_{e_2}\overline{\psi^2}(x)\ffi^2(x)dx,\ \ \ \  \ \ \forall \psi,\ffi\in\Hi.$$
For $s>0$, we introduce the spaces $H^s:=H^s(\Ti,\C)= H^s(e_1,\C)\otimes H^s(e_2,\C)$  and the bilinear Schr\"odinger equation in $\Hi$
\begin{equation}\label{mainT}\tag{BSE*}\begin{split}
\begin{cases}
i\dd_t\psi(t,x)=-\Delta\psi(t,x)+u(t)B\psi(t,x),\ \ \ \ \ \ \ \ &t\in(0,T), \ T>0,\\
\psi(0,x)=\psi_0(x),\ &\ \ \ \ \  \ \ \ \ \ \ \ \ \ \ x\in \Ti.\\
\end{cases}
\end{split}
\end{equation}
The Laplacian $-\Delta$ is equipped with self-adjoint boundary conditions as $v$ is equipped with {\it Neumann-Kirchhoff} boundary conditions, {\it i.e.}
\begin{equation*}\begin{split}
f\text{ is continuous in }v,\ \ \ \ \ \ \ \
\frac{\dd f^1}{\dd x}(0)-\frac{\dd f^1}{\dd x}(1)+\frac{\dd f^2}{\dd x}(0)=0
\end{split}
\end{equation*}
for every $f\in D(-\Delta)$. We assume $B:\psi\rightarrow (\mu\psi^1,0)$ with $\mu(x)=x(1-x)$ and $u\in L^2((0,T),\R)$. We call $\G_t^u$ the unitary propagator generated by the operator $$-\Delta+u(t)B.$$ 
The (\ref{mainT}) corresponds to the following Cauchy systems respectively in $L^2(e_1,\C)$ and $L^2(e_2,\C)$ with $t\in(0,T)$ and $T>0$
\begin{equation*}\begin{split}
\begin{cases}
i\dd_t\psi^1(t)=-\Delta\psi^1(t)+u(t)\mu\psi^1(t),\\
\psi^1(0)=\psi^1_0,\\
\end{cases}\ \ \ \ \ \begin{cases}
i\dd_t\psi^2(t)=-\Delta\psi^2(t),\\
\psi^2(0)=\psi_0^2.\\
\end{cases}
\end{split}
\end{equation*}
Let $\upvarphi:=\{\ffi_k\}_{k\in\N^*}$ be an orthonormal system of $\Hi$ made by eigenfunctions of $-\Delta$ and corresponding to the eigenvalues $\upmu:=\{\mu_k\}_{k\in\N^*}$ such that
\begin{equation*}\ffi_k=\big(\sqrt{{2}}\sin({2k}\pi x),0\big),\ \ \ \ \ \ \ \ \ \ \mu_k={4k^2\pi^2},\ \ \ \ \ \ \ \ \ \ \forall k\in \N^*.\end{equation*} 
We define $\Hi(\upvarphi):=\overline{span\{\varphi_k\ |\ k\in\N^*\}}^{\ L^2}$ and, for $s>0$, the spaces \begin{equation}\label{HSTspaces}
H^s_{\Ti}(\upvarphi)=\{\psi\in \Hi(\upvarphi)\ | \ \sum_{k\in\N^*}|k^s\la\ffi_k,\psi\ra|^2<\infty\}\end{equation}
equipped with the norms $\|\cdot\|_{(s)}=\big(\sum_{k\in\N^*}|k^s\la\ffi_k,\cdot\ra|^2\big)^{{1}/{2}}.$

	\subsection{Well-posedness}
\begin{prop}\label{lauraT}
	
	Let $\psi_0\in H^{3}_{\Ti}(\upvarphi)$ and $u\in L^2((0,T),\R)$. There
	exists a unique mild solution of the (\ref{mainT}) in
	$H^{3}_{\Ti}(\upvarphi)$, i.e. a function $\psi$ such that
	\begin{equation}\label{mild}
	\psi(t,x)=e^{i\Delta t}\psi_0(x)-
	i\int_0^t e^{i\Delta(t-s)}u(s)B\psi(s,x)ds \in C_0([0,T],H^{3}_{\Ti}(\upvarphi)).\\
	\end{equation}
	Moreover, there exists $C=C(T,B,u)>0$ so that $\|\psi\|_{C^0([0,T],H^{3}_{\Ti}(\upvarphi))}\leq C\|\psi_0\|_{(3)}$, while $\|\psi(t)\| =\|\psi_0\|$ for every $t\in[0,T]$ and $\psi_0\in H^{3}_{\Ti}(\upvarphi).$
\end{prop}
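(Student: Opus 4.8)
The plan is to realize \eqref{mild} as a fixed point of the Duhamel map
$$F(\psi)(t)=e^{i\Delta t}\psi_0-i\int_0^t e^{i\Delta(t-s)}u(s)B\psi(s)\,ds$$
on $C^0([0,T],H^3_{\Ti}(\upvarphi))$ and to show that $F$ is a contraction on a short time interval. Before that I would record the structural fact that makes the strategy work: $B$ leaves $\Hi(\upvarphi)$ invariant. Indeed $\mu(x)=x(1-x)$ is invariant under the reflection $x\mapsto 1-x$, while each $\sin(2k\pi x)$ changes sign under it, so the product $\mu(x)\sqrt{2}\sin(2k\pi x)$ is again antisymmetric and hence lies in $\overline{\spn\{\sin(2j\pi x)\}}$; thus $B\ffi_k\in\Hi(\upvarphi)$ for every $k$. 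Since the initial datum has zero tail component and $\psi^2$ then solves the free equation on the half-line with zero data, the whole analysis collapses onto the self-closing edge $e_1$ with the orthonormal system $\{\ffi_k\}$, where both $e^{i\Delta t}$ and $B$ act. It is also convenient to note that $H^3_{\Ti}(\upvarphi)=D(|\Delta|^{3/2})\cap\Hi(\upvarphi)$ with equivalent norms, because $\mu_k=4k^2\pi^2\sim k^2$.

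The core of the argument, and the step I expect to be the main obstacle, is a smoothing estimate for the inhomogeneous term. The difficulty is that $B$ does \emph{not} preserve $H^3_{\Ti}(\upvarphi)$: for $\psi\in H^3_{\Ti}(\upvarphi)$ the function $\mu\psi^1$ is of class $H^3$ and vanishes at the vertex $v$ (as $\mu(0)=\mu(1)=0$), but a direct computation gives $(\mu\psi^1)''(0)=2(\psi^1)'(0)$, which need not vanish, so $B\psi\notin D(|\Delta|^{3/2})$ and one cannot simply propagate it with the unitary group in $H^3_{\Ti}(\upvarphi)$. The remedy, adapting Beauchard--Laurent and its graph versions in \cite{mio3,mio4}, is the regularizing property of the time-integrated propagator: there is $C=C(T)>0$ such that for every $f\in L^2((0,T),H^3\cap\Hi(\upvarphi))$ one has
$$\Big\|\int_0^\cdot e^{i\Delta(\cdot-s)}f(s)\,ds\Big\|_{C^0([0,T],H^3_{\Ti}(\upvarphi))}\le C\,\|f\|_{L^2((0,T),H^3)}.$$
This estimate upgrades the weaker boundary behaviour of $B\psi$ (function regularity $H^3$ plus vanishing at $v$) to the full $D(|\Delta|^{3/2})$ regularity of the integral, and it is the point where the explicit decay of the matrix elements $\la\ffi_j,B\ffi_k\ra=2\int_0^1\sin(2j\pi x)\,\mu(x)\,\sin(2k\pi x)\,dx$ has to be exploited.

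Granting this, the contraction is routine. Since $\mu$ is smooth and vanishes at the endpoints, $\psi\mapsto B\psi$ is bounded from $H^3_{\Ti}(\upvarphi)$ into $H^3\cap\Hi(\upvarphi)$, whence, using $|u(s)|\in L^2(0,\tau)$,
$$\|F(\psi_1)-F(\psi_2)\|_{C^0([0,\tau],H^3_{\Ti}(\upvarphi))}\le C\,\|u\|_{L^2(0,\tau)}\,\|\psi_1-\psi_2\|_{C^0([0,\tau],H^3_{\Ti}(\upvarphi))}.$$
Choosing $\tau$ so small that $C\|u\|_{L^2(0,\tau)}<1$ makes $F$ a contraction and yields a unique mild solution on $[0,\tau]$, together with the bound $\|\psi\|_{C^0([0,\tau],H^3_{\Ti}(\upvarphi))}\le C'\|\psi_0\|_{(3)}$. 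To reach an arbitrary $T$ I would split $[0,T]$ into finitely many subintervals on each of which $\|u\|_{L^2}$ is small enough for the contraction (possible because $u\in L^2((0,T),\R)$), solve successively taking as new initial datum the endpoint value of the previous step, and concatenate; composing the estimates over the subintervals produces the global solution in $C^0([0,T],H^3_{\Ti}(\upvarphi))$ with a constant $C=C(T,B,u)$ as claimed. Finally, the conservation $\|\psi(t)\|=\|\psi_0\|$ follows from the self-adjointness of $-\Delta+u(t)B$ for a.e.\ $t$, which makes the propagator $\G_t^u$ unitary on $\Hi$.
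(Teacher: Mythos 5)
Your overall architecture is the same as the paper's: realize the solution as a fixed point of the Duhamel map in $C^0([0,T],H^3_{\Ti}(\upvarphi))$, obtain a contraction after splitting $[0,T]$ into subintervals on which $\|u\|_{L^2}$ is small, and get norm conservation by a density argument. The genuine gap is in the one step you ``grant'': the smoothing estimate is stated for the wrong class of sources, and in that generality it is \emph{false}. Take $f(s,\cdot)=h:=(\cos(\pi x),0)$, independent of $s$; then $h\in H^3\cap\Hi(\upvarphi)$ (it is antisymmetric with respect to $r$), but $h$ does not vanish at the vertex, $h^1(0)=-h^1(1)=1$. Since $\ffi_k'(0)=\ffi_k'(1)=2\sqrt{2}\,k\pi$, two integrations by parts give
\begin{equation*}
\la\ffi_k,h\ra=-\frac{1}{\mu_k}\big(\ffi_k'(1)h^1(1)-\ffi_k'(0)h^1(0)\big)-\frac{1}{\mu_k}\la\ffi_k,\dd_x^2h\ra=-\frac{4\sqrt{2}\,\pi k\,h^1(1)}{\mu_k}+O(k^{-3}),
\end{equation*}
so that, writing $G(t)=\int_0^te^{i\Delta(t-s)}f(s)\,ds$,
\begin{equation*}
k^3\big|\la\ffi_k,G(t)\ra\big|=\frac{k^3}{\mu_k}\,\big|1-e^{-i\mu_kt}\big|\,\big|\la\ffi_k,h\ra\big|=\frac{\sqrt{2}\,|h^1(1)|}{4\pi^3}\,\big|1-e^{-i\mu_kt}\big|+O(k^{-2}),
\end{equation*}
which is not square-summable in $k$ for almost every $t$ (by Weyl equidistribution, $|1-e^{-4i\pi^2k^2t}|$ does not tend to $0$ for a.e. $t$). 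Hence $G(t)\notin H^3_{\Ti}(\upvarphi)$ and no constant $C(T)$ can exist on the class $L^2((0,T),H^3\cap\Hi(\upvarphi))$. The estimate is true only under the additional vertex condition that the paper imposes, namely $f(s)\in H^3\cap H^2_{\Ti}(\upvarphi)$, equivalently $f(s,v)=0$: this is exactly what kills the $O(k)$ boundary term above and lets the first integration by parts run without boundary contributions. You do observe that $B\psi$ vanishes at $v$ (because $\mu(0)=\mu(1)=0$), so the function you actually feed into the estimate is admissible; but the lemma you rely on must be formulated on this smaller class, otherwise the central step of your proof is not a true statement.

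A second, related misdiagnosis: the decay of the matrix elements $\la\ffi_j,B\ffi_k\ra$ plays no role whatsoever in this estimate; it is used only in the controllability proof (the moment problem of Theorem 2.2). The paper proves the smoothing property by a \emph{third} integration by parts -- legitimate once $f(s,v)=0$ -- which produces the boundary data $\dd_x^2f^1(s,0)$, $\dd_x^2f^1(s,1)$ and the interior term $\int_{e_1}\cos(2k\pi y)\,\dd_x^3f(s,y)\,dy$, and then bounds the $\ell^2$ norm of $\int_0^te^{i\mu_ks}\dd_x^2f^1(s,\cdot)\,ds$ by $\|\dd_x^2f^1(\cdot,\cdot)\|_{L^2((0,t),\C)}$ via the moment-type inequality \cite[Proposition\ B.6]{mio3}, which exploits the uniform spectral gap $\inf_k(\mu_{k+1}-\mu_k)=12\pi^2$ of the sequence $\mu_k=4k^2\pi^2$; the interior term is handled by Cauchy--Schwarz and Parseval. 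Since this estimate is the heart of the proposition and your proposal neither states it correctly nor points to its actual proof, there is a genuine gap exactly where the real work lies; the remaining steps (boundedness of $B$ from $H^3_{\Ti}(\upvarphi)$ to $H^3\cap H^2_{\Ti}(\upvarphi)$, the contraction, the patching in time, and the conservation of the $L^2$ norm) are correct and coincide with the paper's point {\bf 2)}.
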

\begin{proof} 
The statement is proved by using the techniques developed in the proof of \cite[Proposition\ 4.1]{mio3}, which 
generalize the ones of \cite[Lemma\ 1;\ Proposition\ 2]{laurent}.
	\smallskip 
	
	\needspace{3\baselineskip}

	\noindent
	{\bf 1) } Let $\psi\in C^0([0,T], H^3_{\Ti}(\upvarphi))$. We notice $B\psi(s) \in H^{3}\cap H_{\Ti}^2(\upvarphi)$ for almost every $s\in (0,t)$ and $t\in(0,T)$. Let $G(t)=\int_0^t e^{i\Delta(t-s)}u(s)B\psi(s,x)ds$ so that
	\begin{equation*}\begin{split}
	\|G(t)\|_{(3)}&=\Big(\sum_{k\in\N^*}\Big|k^3\int_0^t e^{i\mu_ks}\la\ffi_k,u(s)B\psi(s,\cdot)\ra ds\Big|^2\Big)^\frac{1}{2}.\\
	\end{split}\end{equation*}
	We prove $G(\cdot)\in C^0([0,T], H^3_{\Ti}(\upvarphi))$. For $f(s,\cdot):=u(s)B\psi(s,\cdot)$ such that $f=(f^1,f^2)$, 
	\begin{equation*}\begin{split}
	&\la\ffi_k,f(s,\cdot)\ra	=
	\frac{1}{ \mu_k}\int_{\Ti} \ffi_k(y)\dd_{x}^2f(s,y)dy	=
	\frac{\sqrt{2}}{ (2k)^2 \pi^2}\int_{e_1}\sin(2k\pi y)\dd_{x}^2f^1(s,y)dy \\
	& =
	-\frac{\sqrt{2}}{(2k)^3 \pi^3 }\Big(\dd_{x}^2f^1(s,0)-\dd_{x}^2f^1(s,1)-\int_{e_1}\cos(2k\pi y) \dd_{x}^3f(s,y)dy\Big).\\
	\end{split}\end{equation*}
	Now, there exists $C_1>0$ so that
	\begin{equation*}\begin{split}
	&\left|k^3\int_0^t e^{i\mu_ks}\la\ffi_k,f(s)\ra ds\right|
	\leq{C_1}\left(\left|\int_0^t e^{i\mu_ks}\dd_{x}^2f^1(s,0)ds\right|\right.\\
	&\left.+
	\left|\int_0^t e^{i\mu_ks}\dd_{x}^2f^1(s,1)ds\right|+ \left|\int_0^t e^{i\mu_ks}\int_{e_1}\cos(2k\pi y) \dd_{x}^3f(s,y)dyds\right|\right).\\
	\end{split}\end{equation*}
		We notice $\dd_{x}^3f^1(s,\cdot)\in\overline{span\{\sqrt{{2}}\cos({ 2k\pi }x\big):\ k\in\N^*\}}^{ L^2}$ for almost every $s\in (0,t)$ and $t\in(0,T)$. Thus, 
	\begin{equation*}\begin{split}
	&\|G(t)\|_{(3)}\leq C_1\left(\Big\|\int_0^t \dd_{x}^2f^1(s,0)e^{i \mu_{(\cdot)}s}ds \Big\|_{\ell^2}+\Big\|\int_0^t \dd_{x}^2f^1(s,1)e^{i \mu_{(\cdot)}s}ds \Big\|_{\ell^2}\right.\\
	&+\left.\Big\|\int_0^t e^{i\mu_{(\cdot)}s}\int_{e_1}\cos(\sqrt{\mu_{(\cdot)}}y) \dd_{x}^3f(s,y)dyds\Big\|_{\ell^2}\right)\\
	&\leq C_1\left(\Big\|\int_0^t \dd_{x}^2f^1(s,0)e^{i \mu_{(\cdot)}s}ds \Big\|_{\ell^2}+\Big\|\int_0^t \dd_{x}^2f^1(s,1)e^{i \mu_{(\cdot)}s}ds \Big\|_{\ell^2}\right.\\
	&+\left.\sqrt{t}\left(\int_0^t\Big\|\int_{e_1}\cos(\sqrt{\mu_{(\cdot)}}y) \dd_{x}^3f(s,y)dy\Big\|_{\ell^2}^2ds\right)^\frac{1}{2}\right)	\\
	&\leq C_1\left(\Big\|\int_0^t \dd_{x}^2f^1(s,0)e^{i \mu_{(\cdot)}s}ds \Big\|_{\ell^2}+\Big\|\int_0^t \dd_{x}^2f^1(s,1)e^{i \mu_{(\cdot)}s}ds \Big\|_{\ell^2}+\sqrt{t}\|f\|_{L^2((0,t),H^3)}\right).	
	\end{split}\end{equation*}
	From \cite[Proposition\ B.6]{mio3}, there exist $C_2(t),C_3(t)>0$ uniformly bounded for $t$ in bounded intervals such that
	\begin{equation*}\begin{split}
	\|G(t)\|_{(3)} 
	&\leq C_2(t)\Big(\|\dd_{x}^2 f^1(\cdot,0)\|_{L^2((0,t),\C)}+\|\dd_{x}^2 f^1(\cdot,1)\|_{L^2((0,t),\C)}\Big)+\sqrt{t}\|f\|_{L^2((0,t),H^3)}
	\end{split}\end{equation*}
	and $\|G(t)\|_{(3)}\leq C_3(t)\|f(\cdot,\cdot)\|_{L^2((0,t),H^3)}$. For every $t\in [0,T]$, the last inequality shows that $G(t)\in H^3_\Ti(\upvarphi)$ and the provided upper bound is uniform. The Dominated Convergence Theorem leads to $G\in C^0([0,T], H^3_{\Ti}(\upvarphi))$.

	\smallskip
	
	\needspace{3\baselineskip}

	\noindent
	{\bf 2)} As $Ran(B|_{H_{\Ti}^{3}(\upvarphi)})\subseteq H^{3}\cap H^2_{\Ti}(\upvarphi)\subseteq H^{3}$, we have $B\in L(H^{3}_{\Ti}(\upvarphi),H^{3})$ thanks to the arguments of \cite[Remark\ 2.1]{mio1}. Let $\psi^0\in H_{\Ti}^{3}(\upvarphi)$. We consider the map $F:\psi \in C^0([0,T],H^{3}_{\Ti}(\upvarphi))\mapsto\phi\in C^0([0,T],H^{3}_{\Ti}(\upvarphi)) $ with
	$$\phi(t)=F(\psi)(t)=e^{i\Delta t}\psi^0-\int_0^te^{i\Delta(t-s)}u(s) B\psi(s)ds,\ \ \ \ \ \forall t\in [0,T].$$
	For every $\psi_1,\psi_2\in C^0([0,T],H^{3}_{\Ti}(\upvarphi))$, from the first point of the proof, there exists $C(t)>0$ uniformly bounded for $t$ lying on bounded intervals, such that
	\begin{equation*}\begin{split}
	&\|F(\psi_1)(t)-F(\psi_2)(t)\|_{({3})}\leq\left\|\int_0^t e^{i\Delta(t-s)}u(s) B(\psi_1(s)-\psi_2(s))ds\right\|_{({3})}\\
	&\leq C(t)\|u\|_{L^2((0,t),\R)}\iii B\iii_{L(H^{3}_{\Ti},H^{3})} \|\psi_1-\psi_2\|_{L^\infty((0,t),H^{3}_\Ti(\upvarphi))}.\\
	\end{split}\end{equation*}
	If $\|u\|_{L^2((0,t),\R)}$ is small enough, then $F$ is a contraction and Banach Fixed Point Theorem implies that there exists $\psi \in C^0([0,T],H^{3}_{\Ti}(\upvarphi)) $ such that $F(\psi)=\psi.$ When $\|u\|_{L^2((0,t),\R)}$ is not sufficiently small, one considers $\{t_j\}_{0\leq j\leq n}$ a partition of $[0,t]$ with $n\in\N^*$. We choose a partition such that each $\|u\|_{L^2([t_{j-1},t_j],\R)}$ is so small that the map $F$, defined on the interval $[t_{j-1},t_j]$, is a contraction and we apply the Banach Fixed Point Theorem. 
	
	\smallskip
	
	In conclusion, if $u\in C^0((0,T),\R)$, then $\psi\in C^1((0,T),\Hi(\upvarphi))$. By multiplying (\ref{mainT}) with $\psi(t)$, we obtain that $\dd_t\|\psi(t)\|^2=0$, which leads to $\|\psi(t)\| =\|\psi_0\|$ for every $t\in[0,T]$ and $\psi_0\in H^{3}_{\Ti}(\upvarphi)$. The generalization for $u\in L^2((0,T),\R)$ follows from a classical density argument. \qedhere
\end{proof}
\subsection{Global exact controllability}
\

\noindent
We recall that $\{\ffi_k\}_{k\in\N^*}$ and $\{\mu_k\}_{k\in\N^*}$ respectively are an orthonormal system of $\Hi$ made by eigenfunctions of $-\Delta$ and the corresponding eigenvalues. They are such that
\begin{equation*}\ffi_k=\big(\sqrt{{2}}\sin({2k}\pi x),0\big),\ \ \ \ \ \ \ \ \ \ \mu_k={4k^2\pi^2},\ \ \ \ \ \ \ \ \ \ \forall k\in \N^*.\end{equation*}
Let $\G_t^u$ be the unitary propagator representing the dynamics of (\ref{mainT}) at time $t\in[0,T]$ for $T>0$ and with control $u\in L^2((0,T),\R)$.

\begin{teorema}\label{globalegirino}
 The (\ref{mainT}) is globally exactly controllable in $H^3_\Ti(\upvarphi)$, {\it i.e.} for every $\psi_1,\psi_2\in H^3_\Ti(\upvarphi)$ such that $\|\psi_1\|=\|\psi_2\|$, there exist $T>0$ and $u\in L^2((0,T),\R)$ such that
	$$\G_T^u\psi_1=\psi_2.$$
	 In addition, the (\ref{mainT}) is energetically controllable in $\{\mu_k\}_{{k\in\N^*}}$, {i.e.,} for any  $m$ and $n\in\N^*$, there exist $T>0$ and $u\in L^2((0,T),\R)$ such that $$\G_T^u\varphi_m=\varphi_n.$$
	\end{teorema}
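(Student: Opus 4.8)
The plan is to reduce the problem to a bilinear control problem on the compact loop $e_1$ and then to verify, by explicit computation, that the spectral and coupling hypotheses behind the abstract results of \cite{mio3} and \cite{mio4} are satisfied in this example. First I would observe that the controlled flow leaves $\Hi(\upvarphi)$ invariant. The free propagator preserves $\Hi(\upvarphi)$ by construction, and so does $B$: since $\mu(x)=x(1-x)$ is symmetric under $x\mapsto 1-x$ while each $\sin(2k\pi x)$ is antisymmetric, the product $\mu\,\sin(2k\pi\cdot)$ is again antisymmetric, whence $B\ffi_k\in\Hi(\upvarphi)$. Thus, by Proposition~\ref{lauraT}, the dynamics in $H^3_\Ti(\upvarphi)$ is well posed, norm-preserving, and entirely governed by the discrete data $\{(\ffi_k,\mu_k)\}_{k\in\N^*}$; the dispersive half-line $e_2$ decouples and plays no role, so the question becomes genuinely of the compact type treated in \cite{mio3},\cite{mio4}.

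Next I would record the spectral gap and compute the coupling coefficients, which is the heart of the matter. Since $\mu_k=4k^2\pi^2$, one has $\mu_{k+1}-\mu_k=4\pi^2(2k+1)\to+\infty$, so the uniform gap condition holds and $\{e^{i\mu_k t}\}_{k\in\N^*}$ is well behaved for the underlying moment problem. Using $2\sin(2k\pi x)\sin(2j\pi x)=\cos(2(k-j)\pi x)-\cos(2(k+j)\pi x)$ together with $\int_0^1 x(1-x)\cos(2n\pi x)\,dx=-\tfrac{1}{2n^2\pi^2}$ for $n\neq 0$, I would obtain, for $k\neq j$,
\[
\la B\ffi_k,\ffi_j\ra=\frac{1}{2\pi^2}\Big(\frac{1}{(k+j)^2}-\frac{1}{(k-j)^2}\Big)\neq 0,\qquad \la B\ffi_k,\ffi_k\ra=\frac16+\frac{1}{8k^2\pi^2}.
\]
In particular $\la B\ffi_1,\ffi_k\ra=\tfrac{1}{2\pi^2}\big(\tfrac{1}{(k+1)^2}-\tfrac{1}{(k-1)^2}\big)\sim-\tfrac{2}{\pi^2 k^3}$, so $\inf_k|k^3\la B\ffi_1,\ffi_k\ra|>0$. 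This $1/k^3$ decay is exactly the rate matching $H^3_\Ti(\upvarphi)=D(|{-}\Delta|^{3/2})\cap\Hi(\upvarphi)$: it guarantees that for $H^3$-targets the associated moment problem has $\ell^2$ data and is solvable, so the linearized control-to-state map is onto the right space and an inverse-mapping argument yields local exact controllability around each eigenstate $\ffi_n$.

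For the global step I would exploit that all off-diagonal couplings are nonzero, in particular $\la B\ffi_k,\ffi_{k+1}\ra\neq 0$, and that the associated transition frequencies $\mu_{k+1}-\mu_k=4\pi^2(2k+1)$ are pairwise distinct and increasing. The chain $\{(k,k+1)\}_{k\in\N^*}$ therefore provides a non-degenerate connectedness structure linking all eigenstates, which furnishes global approximate controllability. Combining this with the local exact controllability of the previous step and the time-reversibility of the equation yields, for every $\psi_1,\psi_2\in H^3_\Ti(\upvarphi)$ with $\|\psi_1\|=\|\psi_2\|$, a time $T>0$ and $u\in L^2((0,T),\R)$ with $\G_T^u\psi_1=\psi_2$, exactly as in \cite{mio3}. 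Energetic controllability in $\{\mu_k\}$ is then immediate: each $\ffi_m,\ffi_n\in H^3_\Ti(\upvarphi)$ with $\|\ffi_m\|=\|\ffi_n\|=1$, so the same statement gives $\G_T^u\ffi_m=\ffi_n$ (alternatively, the transitions between eigenstates can be produced directly by the explicit controls of \cite{mio2} using $\la B\ffi_m,\ffi_n\ra\neq 0$).

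The step I expect to be the main obstacle is the second one: verifying the non-degeneracy of the coupling at precisely the $H^3$ scale. The situation is borderline, since a decay of $\la B\ffi_1,\ffi_k\ra$ faster than $1/k^3$ would make exact controllability in $H^3$ fail, while a slower decay would correspond to a different regularity; hence the computation producing the exact $-2/(\pi^2 k^3)$ asymptotics, the verification that no coefficient vanishes, and the selection of a frequency-non-resonant connectedness chain are where the argument must be carried out with care. Everything else is a matter of transferring the compact-graph machinery of \cite{mio3} and \cite{mio4} to the invariant subspace $\Hi(\upvarphi)$.
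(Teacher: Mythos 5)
Your reduction to the invariant subspace $\Hi(\upvarphi)$, the coupling computations, and the local step all match the paper: the formula $\la\ffi_k,B\ffi_1\ra=\frac{-2k}{\pi^2(k^2-1)^2}$, the lower bound $|\la\ffi_k,B\ffi_1\ra|\geq Ck^{-3}$, the gap $\inf_k|\mu_{k+1}-\mu_k|=12\pi^2$, and the inverse-mapping/moment-problem argument are exactly the paper's point \textbf{1)}. The gap is in your global step, and it is genuine.

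You claim that the chain $\{(k,k+1)\}_{k\in\N^*}$, having pairwise distinct and increasing frequencies $\mu_{k+1}-\mu_k=4\pi^2(2k+1)$, is a non-degenerate chain of connectedness. But non-degeneracy in the sense of \cite[Definition\ 3]{nabile} requires each link's frequency to differ from the frequency of \emph{every other} pair with nonzero coupling, not merely from the other links' frequencies. Here the spectrum $\mu_k=4k^2\pi^2$ is highly resonant: $\mu_8-\mu_7=4\pi^2\cdot 15=\mu_4-\mu_1$, and by your own computation $\la B\ffi_4,\ffi_1\ra\neq 0$, so the link $(7,8)$ of your chain is resonant with the coupled transition $(1,4)$ (more generally, whenever $2k+1$ is composite, say $2k+1=ab$ with $a>b>1$ odd, the link $(k,k+1)$ is resonant with $\big(\frac{a-b}{2},\frac{a+b}{2}\big)$). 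Since \emph{all} off-diagonal couplings are nonzero, these resonances cannot be ignored, your chain is degenerate, and the Lie--Galerkin/Chambrion results you invoke do not apply directly to $(A,B)$. This is precisely why the paper does not argue this way: it first perturbs, writing $u=u_0+u_1$ and applying analytic perturbation theory (Appendix A, Lemmas \ref{chain} and \ref{equi}) to show that for generic small $u_0$ the perturbed eigenvalues $\mu_k^{u_0}$ satisfy $\mu_k^{u_0}-\mu_j^{u_0}-\mu_l^{u_0}+\mu_m^{u_0}\neq 0$ and all couplings $\la\ffi_k^{u_0},B\ffi_j^{u_0}\ra$ stay nonzero, so that $(A+u_0B,B)$ \emph{does} admit a non-degenerate chain. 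This perturbation argument needs an extra hypothesis your proposal never identifies: on resonant quadruples ($j^2-k^2-l^2+m^2=0$) the diagonal couplings must separate, $B_{j,j}-B_{k,k}-B_{l,l}+B_{m,m}\neq 0$, which the paper verifies from $B_{k,k}=\frac16+\frac{1}{8k^2\pi^2}$ via the elementary Remark \ref{reciprocal} ($j^{-2}-k^{-2}-l^{-2}+m^{-2}\neq 0$). Your diagonal computation actually provides exactly the input needed for this check, but you never use it for that purpose.

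A secondary, smaller gap: to combine approximate and local exact controllability you need the approximate step in the $H^3_\Ti(\upvarphi)$ topology (or at least strong enough to land inside the $H^3$-neighborhood $O^3_{\epsilon,T}$ of $\ffi_1$ where the inverse mapping theorem applies). Lie--Galerkin methods natively give $L^2$-approximate controllability; upgrading to $\|\cdot\|_{(3)}$ requires the uniform bounds on $\|u\|_{BV}$, $\|u\|_{L^\infty}$, $T\|u\|_{L^\infty}$ together with Kato's propagation of regularity and an interpolation inequality, as carried out in point \textbf{1)(f)} of the proof of Proposition \ref{approx}. Your proposal passes over this in silence.
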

\begin{proof}
	{\bf 1) Local exact controllability in $H^3_{\Ti}(\upvarphi)$.} For $\epsilon,T,s>0$, let
	$$O_{\epsilon,T}^{s}:=\big\{\psi\in H_{\Ti}^{s}(\upvarphi)\big|\ \|\psi\|=1,\ \|\psi -\ffi_1(T)\|_{(s)}<\epsilon\big\}, \ \ \ \ \ \ \ \ffi_1(T)=e^{-i\mu_1T}\ffi_1.$$ We prove the existence of $T,\epsilon>0$ so that, for every $\psi\in O_{\epsilon,T}^{3}$, there exists $u\in L^2((0,T),\R)$ such that $\psi= \G^u_T\ffi_1.$
	To this purpose, we consider the map $\alpha$, the sequence with elements $\alpha_{k}(u)=\la \ffi_k(T), \G_{T}^u\ffi_1\ra$ for $k\in\N^*$, such that
	$$\alpha:L^2((0,T),\R)\longrightarrow Q:=\{{\bf x}:=\{x_k\}_{k\in\N^*}\in h^3(\C)\ |\ \|{\bf x}\|_{\ell^2}=1\}$$ 
	with $h^3$ defined in (\ref{spaces}). The local exact controllability of the bilinear Schr\"odinger equation in $O_{\epsilon,T}^{3}$ with $T>0$ is equivalent to the surjectivity of the map
	$\G_T^{(\cdot)}\ffi_1:u\in L^2((0,T),\R)\longmapsto \psi \in O_{\epsilon,T}^{s}\subset  H^3_{\Ti}(\upvarphi)$. 
As $$\G_{t}^u\ffi_1=\sum_{k\in\N^*}{\ffi_k(t)}\la \ffi_k(t),\G_{t}^u\ffi_1\ra, \ \ \ \ \  \ \ \ \ T>0,\ \ u\in 
L^2((0,T),\R),$$ the controllability is equivalent to the local surjectivity of $\alpha$. To this end, we use the 
Generalized Inverse Function Theorem (\cite[Theorem\  1;\ p.\ 240]{Inv}) and we study the surjectivity of 
$\gamma(v):=(d_u\alpha(0))\cdot\ v$ the Fréchet derivative of $\alpha$ with 
$\alpha(0)=\updelta=\{\delta_{k,1}\}_{k\in\N^*}$. Let $B_{j,k}:=\la\ffi_j,B\ffi_k\ra$ with $j,k\in\N^*$. As in 
the proof of \cite[Proposition\ 2.1]{mio2}, the map $\gamma$ is the sequence of elements $\gamma_{k}(v):=
	-i\int_{0}^Tv(\tau)e^{i(\mu_k-\mu_1)s}d\tau B_{k,1}$ with $k\in\N^*$ so that 
	$$\gamma:L^2((0,T),\R)\longrightarrow T_{\updelta}Q=\{{\bf x}:=\{x_k\}_{k\in\N^*}\in h^3(\C)\ |\ ix_1\in\R\}.$$ 
	The surjectivity of $\gamma$ corresponds to the solvability of the moments problem
	\begin{equation}\begin{split}\label{mome1}
	{x_{k}}/{B_{k,1}}=-i\int_{0}^Tu(\tau)e^{i(\mu_k-\mu_1)\tau}d\tau,\ \ \ \ \ \ \ \ \ \ \forall \{x_{k}\}_{k\in\N^*}\in T_{\updelta}Q\subset h^3. 
	\\
	\end{split}\end{equation} 
	By direct computation, we know $|\la\ffi_1,B\ffi_1\ra|\neq 0$ and, for $k\in\N^*\setminus\{1\}$, there holds 
	$$\la\ffi_k,B\ffi_1\ra=\int_0^1x(1-x)2\sin(2\pi x)\sin(2 k\pi x) ds=\frac{-2 k  }{ (k^2-1)^2 \pi^2}.$$
	Thus, there exists $C>0$ such that $|\la\ffi_k,B\ffi_1\ra|\geq C k^{-3}$ for every $k\in\N^*.$ Now, 
	$$\big\{x_k (\la\ffi_k,B\ffi_1\ra)^{-1}\big\}_{k\in\N^*}\in \ell^2,\ \ \ \ \ \ \ \ \ \ \ \ \ \ \ i{x_{1}}/\la\ffi_1,B\ffi_1\ra\in\R.$$ 
	In conclusion, the solvability of $(\ref{mome1})$ is guaranteed by \cite[Proposition\ B.5]{mio3} since $$\{x_k 
B_{k,1}^{-1}\}_{k\in\N^*}\in\{\{c_k\}_{k\in\N^*}\in \ell^2\ |\ c_1\in\R\},\ \ \ \ \ \ 
\inf_{k\in\N^*}|\mu_{k+1}-\mu_k|={12\pi^2}.$$

	\smallskip
	
	\noindent
	{\bf 2) Global exact controllability.}  Let $T,\epsilon>0$ be so that {\bf 1)} is valid. Thanks to Remark \ref{approxT} (Appendix \ref{approximatecon}), for any $\psi_1,\psi_2\in H^{3}_{\Ti}(\upvarphi)$ such that $\|\psi_1\|=\|\psi_2\|=p$, there exist $T_1,T_2>0$, $u_1\in L^2((0,T_1),\R)$ and $u_2\in L^2((0,T_2),\R)$ such that $$\|\G^{u_1}_{T_1}p^{-1}\psi_1-\ffi_1\|_{(3)}<{\epsilon},\ \ \ \ \ \|\G^{u_2}_{T_2}p^{-1}\psi_2-\ffi_1\|_{(3)}<{\epsilon}$$
	and $ p^{-1}\G^{u_1}_{T_1}\psi_1,p^{-1}\G^{u_2}_{T_2}\psi_2\in O_{\epsilon,T}^{3}.$ From {\bf 1)}, there exist $u_3,u_4\in L^2((0,T),\R)$ such that $$\G_T^{u_3}\G^{u_1}_{T_1}\psi_1=\G_T^{u_4}\G^{u_2}_{T_2}\psi_2=p\ffi_1.$$ In conclusion, there exist $T>0$ and $\widetilde u\in L^2((0,\widetilde T),\R)$ such that
	\begin{equation*}\G_{\widetilde T}^{\widetilde u}\psi_1=\psi_2.\end{equation*}

	\noindent
	{\bf 3) Energetic controllability.} The energetic controllability follows as $\ffi_k\in H^s_{\Ti}(\upvarphi)$ for every $s>0$ and $k\in\N^*.$ \qedhere
\end{proof}

\section{Generic graphs}\label{preli}

Let $\Gi$ be a generic infinite graph composed by $N\in\N^*\cup\{+\infty\}$ edges $\{e_j\}_{j\leq N}$ of lengths $\{L_j\}_{j\leq N}\subset\R^+\cup\{+\infty\}$ and $M\in\N^*$ vertices $\{v_j\}_{j\leq M}$.

\noindent
Let the bilinear Schr\"odinger equation in the Hilbert space $\Hi:=L^2(\Gi,\C)$
\begin{equation}\label{mainx1}\tag{BSE}\begin{split}
\begin{cases}
i\dd_t\psi(t,x)=-\Delta\psi(t,x)+u(t)B\psi(t,x),\ \ \ \ \ \ \ \ &t\in(0,T), \ T>0,\\
\psi(0,x)=\psi_0(x),\ &\ \ \ \ \  \ \ \ \ \ \ \ \ \ \ x\in \Gi.\\
\end{cases}
\end{split}
\end{equation}
The Laplacian $A=-\Delta$ is equipped with self-adjoint boundary conditions, $B$ is a bounded symmetric operator and $u\in L^2((0,T),\R)$. When the (\ref{mainx1}) is well-posed, we call $\G_t^u$ the unitary propagator generated by $A+u(t)B.$
We call $V_e$ and $V_i$ the external and the internal vertices of $\Gi$, {\it i.e.}
$$V_e:=\big\{v\in\{v_j\}_{ j\leq M}\ |\ \exists ! e\in\{e_j\}_{j\leq N}: v\in e\big\},\ \ \ \ \ V_i:=\{v_j\}_{ j\leq M}\setminus V_e.$$
For every $v$ vertex of $\Gi$, we denote $N(v):=\big\{l \in\{1,...,N\}\ |\ v\in e_l\big\}$ and each $e_k$ is considered to be parametrized with a coordinate going from $0$ to $L_k$. We equip $\Hi=L^2(\Gi,\C)$ with the scalar product
$$\la\psi,\ffi\ra:=\la\psi,\ffi\ra_{\Hi}=\sum_{j\leq N}\la\psi^j,\ffi^j\ra_{L^2(e_j,\C)}=\sum_{j\leq N}\int_{e_j}\overline{\psi^j}(x)\ffi^j(x)dx,\ \ \ \  \ \ \forall \psi,\ffi\in\Hi.$$
We call $\|\cdot\|=\sqrt{\la\cdot,\cdot\ra}$ the norm in $\Hi$ and, for $s>0$, we introduce the spaces
$$H^s:=H^s(\Gi,\C)=\Big\{\psi=(\psi^1,...,\psi^N)\in\prod_{j\leq N}H^s(e_j,\C)\ |\ \sum_{j\leq N}\|\psi^j\|_{H^s(e_j,\C)}^2<\infty\Big\}.$$ 
In the (\ref{mainx1}), the operator $A$ is a self-adjoint Laplacian such that the functions in $D(A)$ satisfy the following boundary conditions. Each $v\in V_i$ is equipped with {\it Neumann-Kirchhoff} boundary conditions when the function $f$ is continuous in $v$ and
\begin{equation*}\begin{split}
	\sum_{e\ni v}\frac{\dd f}{\dd x_e}(v)=0,\ \ \ \ \ \ \ \ \  \ \forall f\in D(A).
	\end{split}
	\end{equation*}
The derivatives are assumed to be taken in the directions away from the vertex (outgoing directions). In addition, the external vertices $V_e$ are equipped with {\it Dirichlet} or {\it Neumann} type boundary conditions. As in \cite{mio3}, we respectively call ($\NN\KK$), ($\Di$) and ($\NN$)  the {\it Neumann-Kirchhoff}, {\it Dirichlet} and {\it Neumann} boundary conditions characterizing $D(A)$.

\smallskip

In the current work, we denote a graph $\Gi$ as {\it quantum graph} when a self-adjoint Laplacian $A$ is defined on $\Gi$. 
We say that $\Gi$ is equipped with one of the previous boundaries in a vertex $v$, when each $f\in D(A)$ satisfies it in $v$.
By simplifying the notation of \cite{mio3}, we say that $\Gi$ is equipped with ($\Di$) (or ($\NN$)) when, for every $f\in D(A)$, the function $f$ satisfies ($\Di$) (or ($\NN$)) in every $v\in V_e$ and verifies ($\NN\KK$) in every $v\in V_i$. In addition, the graph $\Gi$ is equipped with ($\Di$/$\NN$) when, for every $f\in D(A)$ and $v\in V_e$, the function $f$ satisfies ($\Di$) or ($\NN$) in $v$ and $f$ verifies ($\NN\KK$) in every $v\in V_i$. 

\smallskip

Let $\upvarphi:=\{\ffi_k\}_{k\in\N^*}$ be an orthonormal system of $\Hi$ made by eigenfunctions of $A$ and let $\{\mu_{k}\}_{k\in\N^*}$ be the corresponding eigenvalues. We define  
$$\Gi(\upvarphi)=\bigcup_{k\in\N^*}supp(\ffi_k),\ \ \ \ \  \ \Hi(\upvarphi):=\overline{span\{\varphi_k\ |\ k\in\N^*\}}^{\ L^2},$$ $$H^s_{\Gi}(\upvarphi)=\{\psi\in \Hi(\upvarphi)\ | \ \sum_{k\in\N^*}|k^s\la\ffi_k,\psi\ra|^2<\infty\},\ \ \ \ \ \ \|\cdot\|_{(s)}^2=\sum_{k\in\N^*}|k^s\la\ffi_k,\cdot\ra|^2$$
with $s>0$. Let $V_e(\upvarphi)$ ($V_i(\upvarphi)$) be the external (internal) vertices of $\Gi(\upvarphi)$.
	\begin{osss}\label{normequivalence}
	Let $c\in \R^+$ be such that $0\not\in\sigma(A+c,\Hi(\upvarphi))$ (the spectrum of $A+c$ in the Hilbert space 
$\Hi(\upvarphi)$).  As $\Gi(\upvarphi)$ is a compact graph, thanks to \cite[Lemma\ 2.3]{mio3}, for every $s>0,$ we 
have $\|\cdot\|_{(s)}\asymp \||A+c|^\frac{s}{2}\cdot\|$ in $H^s_\Gi(\upvarphi)$, {\it i.e.} there exists 
$C_1,C_2>0$ such that $$C_1\|\psi\|_{(s)}\leq \||A+c|^{s/2}\psi\|\leq C_2\|\psi\|_{(s)},\ \ \ \ \ \ \ \ \forall 
\psi \in H^s_\Gi(\upvarphi).$$
\end{osss}

\noindent
Now, $\Gi(\upvarphi)$ is the quantum graph associated to a Laplacian $-\Delta$ so that
$$D(-\Delta)=\{\psi\in L^2(\Gi(\upvarphi),\C)\ |\ \exists \psi_1\in H^2_\Gi(\upvarphi)\ :\ \psi_1|_{\Gi(\upvarphi)}=\psi\}.$$
Let $[r]$ be the entire part of $r\in\R$. For $s>0$, we define the spaces
\begin{equation}\label{spaces}
		\begin{split}
		H^s_{\NN\KK}(\upvarphi):=\Big\{&\psi\in \Hi(\upvarphi)\cap H^s\ |\ \dd_x^{2n_2}\psi\text{ continuous in }v,\  \sum_{e\in N(v)}\dd_{x_e}^{2n_1+1}\psi(v)=0,\\
		& \ \forall n_1,n_2\in\N^*\cup\{0\},\ n_1<\big[({s+1})/{2}\big],\  n_2<\big[{s}/{2}\big],\  \forall v\in V_i\Big\},\\
		h^s:=\Big\{&\{a_k\}_{k\in\N^*}\subset{\C}\ \big|\ \sum_{k\in\N^*}|k^{s}a_k|^2<\infty\Big\}.\\
		\end{split}
		\end{equation}
		We equip the space $h^s$ for $s>0$ with the norm $\|\cdot\|_{(s)}$ such that
		$$\forall \{a_{k}\}_{k\in\N^*}\in h^s\ \ \ \ \ \big\|\{a_{k}\}_{k\in\N^*}\big\|_{(s)}:=\Big(\sum_{k\in\N^*}|k^sa_{k}|^2\Big)^{\frac{1}{2}}.$$
		Let $\eta>0,$ $a\geq 0$ and $I:=\{(j,k)\in(\N^*)^2:j\neq k\}.$		
		
\needspace{3\baselineskip}
\begin{assumptionI}[$\upvarphi,\eta$]
	The operator $B:\Hi(\upvarphi)\rightarrow\Hi(\upvarphi)$ is bounded and symmetric in $\Hi(\upvarphi)$, $Ran(B|_{H^2_{\Gi}(\upvarphi)})\subseteq H^2_{\Gi}(\upvarphi)$.
	
	\begin{enumerate}
		\item There exists $C>0$ such that $|\la\ffi_k,B\ffi_1\ra|\geq\frac{C}{k^{2+\eta}}$ for every $k\in\N^*$.
		\item For every $(j,k),(l,m)\in I$ such that $(j,k)\neq(l,m)$ and $\mu_j-\mu_k=\mu_l-\mu_m,$ it holds $\la\ffi_j,B\ffi_j\ra-\la\ffi_k,B\ffi_k\ra-\la\ffi_l,B\ffi_l\ra+\la\ffi_m,B\ffi_m\ra\neq 0.$
	\end{enumerate}
	
\end{assumptionI}

\begin{assumptionII}[$\upvarphi,\eta,a$] Let one of the following points be satisfied.
	\begin{enumerate}
		
		\item When $\Gi(\upvarphi)$ is equipped with ($\Di$/$\NN$) and $a+\eta\in(0, 3/2)$, there exists 
		$d\in[\max\{a+\eta,1\},3/2)$ such that $Ran(B|_{H_{\Gi}^{2+d}(\upvarphi)})\subseteq H^{2+d}\cap H^2_{\Gi}(\upvarphi).$

		\item When $\Gi(\upvarphi)$ is equipped with ($\NN$) and $a+\eta\in(0, 7/2)$, there exist $d\in[\max\{a+\eta,2\},7/2)$ and $d_1\in(d,7/2)$ such that $Ran(B|_{H_{\Gi}^{2+d}(\upvarphi)})\subseteq H^{2+d}\cap H^{1+d}_{\NN\KK}(\upvarphi)\cap H^2_{\Gi}(\upvarphi)$ and $Ran(B|_{ H^{d_1}_{\NN\KK}(\upvarphi)})\subseteq H^{d_1}_{\NN\KK}(\upvarphi).$

		\item When $\Gi$ is equipped with ($\Di$) and $a+\eta\in(0, 5/2)$, there exists 
$d\in[\max\{a+\eta,1\},5/2)$ such that $Ran(B|_{H_{\Gi}^{2+d}(\upvarphi)})\subseteq H^{2+d}\cap 
H^{1+d}_{\NN\KK}(\upvarphi)\cap H^2_{\Gi}(\upvarphi).$ If $d\geq 2$, then there exists $d_1\in(d,5/2)$ such that 
there holds $Ran(B|_{H^{d_1}\cap \Hi(\upvarphi)})\subseteq H^{d_1}\cap \Hi(\upvarphi).$

	\end{enumerate}
\end{assumptionII}

From now on, we omit the terms $\upvarphi,$ $\eta$ and $a$ from the notations of Assumptions I and Assumptions II when their are not relevant.

\subsection{Interpolation properties and well-posedness}\label{well}

We present {\it interpolation properties} for the spaces $H^s_{\Gi}(\upvarphi)$ with $s>0$. The result follows from 
\cite[Proposition\ 4.2]{mio3} as $\Gi(\upvarphi)$ is a compact graph.
\begin{prop}[Proposition 4.2; \cite{mio3}]\label{bor}Let $\upvarphi:=\{\ffi_k\}_{k\in\N^*}$ be an orthonormal 
system of $\Hi$ made by eigenfunctions of $A$.
	
	\smallskip
	\noindent
	{\bf 1)} If the quantum graph $\Gi(\upvarphi)$ is equipped with ($\Di$/$\NN$), then
	$$H^{s_1+s_2}_{\Gi}(\upvarphi)=H_{\Gi}^{s_1}(\upvarphi)\cap H^{s_1+s_2} \ \ \ \text{for}\ \ \ s_1\in\N,\ s_2\in[0,1/2).$$
	
	\noindent
	{\bf 2)} If the quantum graph $\Gi(\upvarphi)$ is equipped with ($\NN$), then
	$$H^{s_1+s_2}_{\Gi}(\upvarphi)=H_{\Gi}^{s_1}(\upvarphi)\cap H^{s_1+s_2}_{\NN\KK}(\upvarphi) \ \ \ \text{for}\ \ \ s_1\in 2\N\,\ s_2\in[0,3/2).$$
	
	\noindent
	{\bf 3)} If the quantum graph $\Gi(\upvarphi)$ is equipped with ($\Di$), then
	$$H^{s_1+s_2+1}_{\Gi}(\upvarphi)=H_{\Gi}^{s_1+1}(\upvarphi)\cap H^{s_1+s_2+1}_{\NN\KK}(\upvarphi) \ \ \ \text{for}\ \ \ s_1\in 2\N,\ s_2\in[0,3/2).$$
\end{prop}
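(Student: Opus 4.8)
The plan is to recognize $\{H^s_\Gi(\upvarphi)\}_{s\ge 0}$ as the complex interpolation scale generated by the self-adjoint operator $A$ and then to identify its members with the concrete Sobolev-with-boundary-conditions spaces by interpolating between explicitly known integer endpoints. First I would fix $c\in\R^+$ with $0\notin\sigma(A+c,\Hi(\upvarphi))$ and invoke Remark \ref{normequivalence}: since $\Gi(\upvarphi)$ is compact, $\|\cdot\|_{(s)}\asymp \||A+c|^{s/2}\cdot\|$, so $H^s_\Gi(\upvarphi)=D(|A+c|^{s/2})$ with equivalent norms. As $A+c$ is positive and self-adjoint on $\Hi(\upvarphi)$, its fractional power domains form a complex interpolation scale, giving for $0\le\sigma_0\le\sigma_1$ and $\theta\in[0,1]$
$$\big[H^{\sigma_0}_\Gi(\upvarphi),H^{\sigma_1}_\Gi(\upvarphi)\big]_\theta=\big[D(|A+c|^{\sigma_0/2}),D(|A+c|^{\sigma_1/2})\big]_\theta=H^{(1-\theta)\sigma_0+\theta\sigma_1}_\Gi(\upvarphi).$$

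Next I would make the integer endpoints explicit. Since $A=-\Delta$ acts edgewise as $-\dd_x^2$, on the compact graph $\Gi(\upvarphi)$ one has $\psi\in D(A^n)$ if and only if $\psi\in H^{2n}$ and each of $\psi,A\psi,\dots,A^{n-1}\psi$ obeys the boundary conditions defining $D(A)$. Because $A^j$ differentiates $2j$ times, continuity of $A^j\psi=(-1)^j\dd_x^{2j}\psi$ at the internal vertices becomes continuity of $\dd_x^{2j}\psi$, while the Kirchhoff condition on $A^j\psi$ becomes $\sum_{e\in N(v)}\dd_{x_e}^{2j+1}\psi(v)=0$; the external conditions transform analogously into even/odd derivative conditions according to whether they are of Dirichlet or Neumann type. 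This is exactly the list of constraints in the definition (\ref{spaces}) of $H^{2n}_{\NN\KK}(\upvarphi)$ together with the external conditions, so the two scales coincide at the integer (respectively, odd-integer-shifted in the Dirichlet case) endpoints.

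Then I would interpolate between two consecutive such endpoints and compare with the interpolation of the concrete Sobolev spaces, matched through standard edgewise Sobolev interpolation and the trace theorem. The decisive point is that a boundary condition prescribing $\dd_x^m\psi$ at a vertex is stable under interpolation only on $H^s$ with $s>m+\tfrac12$ and is invisible below that threshold; the index cutoffs $n_1<[(s+1)/2]$ and $n_2<[s/2]$ appearing in (\ref{spaces}) are precisely those that record which conditions survive. This is why case {\bf 1)} has the narrow window $s_2\in[0,1/2)$ (between consecutive integers $s_1$ no half-integer trace threshold is crossed, so no new condition appears and the right-hand side carries only the smoothness $H^{s_1+s_2}$ together with the conditions already present in $H^{s_1}_\Gi(\upvarphi)$), whereas the Neumann cases {\bf 2)} and {\bf 3)} enjoy the wider window $s_2\in[0,3/2)$ because their conditions involve at worst a first derivative whose trace emerges only at $s=3/2$; the Dirichlet shift in {\bf 3)} reflects that the value condition is essential and already encoded in $H^{s_1+1}_\Gi(\upvarphi)$.

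The step I expect to be the main obstacle is showing that interpolation commutes with intersecting by the boundary-condition subspaces, namely that
$$\big[H^{s_1}_\Gi(\upvarphi)\cap X_0,\ H^{s_1+1}_\Gi(\upvarphi)\cap X_1\big]_\theta=H^{s_1+\theta}_\Gi(\upvarphi)\cap[X_0,X_1]_\theta,$$
where $X_0,X_1$ encode the admissible vertex conditions at the endpoints. Establishing this requires a retraction--coretraction argument: a bounded extension operator off $\Gi(\upvarphi)$ that respects the continuity and Kirchhoff relations at the internal vertices, so that the boundary subspaces are complemented uniformly along the scale and the reiteration theorem applies. The branching geometry of the internal vertices and the nonlocal nature of the Kirchhoff sum are what make this construction more delicate than on a single interval, and it is here that the compactness of $\Gi(\upvarphi)$ and the structural hypotheses on the boundary type (($\Di/\NN$), ($\NN$), ($\Di$)) are genuinely used.
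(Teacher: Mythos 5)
The paper itself offers no proof of this proposition: it is imported verbatim from \cite[Proposition~4.2]{mio3}, and the only argument the text supplies is the observation that the cited compact-graph result applies here because $\Gi(\upvarphi)$ (unlike $\Gi$) is a compact graph. So your proposal is not really competing with an argument in the paper but attempting to reprove the imported result. Its skeleton is the standard one and, in spirit, the one of the cited reference: identify $H^s_\Gi(\upvarphi)$ with $D(|A+c|^{s/2})$ via Remark~\ref{normequivalence}, use that fractional-power domains of a positive self-adjoint operator form a complex interpolation scale, compute the integer endpoints $D(A^n)$ explicitly as Sobolev spaces with iterated vertex conditions, and track which conditions survive below the trace thresholds.

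As a proof, however, there is a genuine gap, and you name it yourself without closing it: everything hinges on the claim that complex interpolation commutes with intersecting by the vertex-condition subspaces, equivalently on a retraction--coretraction pair (a bounded extension operator on the metric graph, compatible with continuity, Kirchhoff, Dirichlet and Neumann conditions at vertices of arbitrary degree, bounded simultaneously on both endpoint spaces). This is not a routine verification: on a single interval it is already the content of the Beauchard--Laurent appendix, and on a graph the branching at internal vertices and the nonlocal Kirchhoff sum are exactly what makes the statement nontrivial. Moreover, the specific windows $s_2\in[0,1/2)$ versus $s_2\in[0,3/2)$, and the unit shift in case \textbf{3)}, are \emph{outputs} of that construction, not facts you may assume; your trace-threshold heuristic (\virgolette{a condition on $\dd_x^m\psi$ is invisible on $H^s$ for $s<m+1/2$}) is itself a consequence of the same extension-operator argument in this setting. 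Without executing that step (or simply citing \cite[Proposition~4.2]{mio3}, as the paper does), your argument only establishes the abstract identity $[H^{\sigma_0}_\Gi(\upvarphi),H^{\sigma_1}_\Gi(\upvarphi)]_\theta=H^{(1-\theta)\sigma_0+\theta\sigma_1}_\Gi(\upvarphi)$; the identification of these spaces with $H^{s_1+s_2}\cap(\text{vertex conditions})$, which is the entire assertion of the proposition, remains unproven.
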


In the following section, we ensure the {\it well-posedness} of the (\ref{mainx1}).

\begin{prop}\label{laura}
	
	Let the couple $(A,B)$ satisfy Assumptions II$(\upvarphi,\eta,\tilde d)$ with $\eta>0$ and $\tilde d\geq 0$. Let $d$ be introduced in Assumptions II.
	
	\medskip
	\noindent
	{\bf 1)} Let $T >0$ and $ f\in L^2((0,T), H^{2+d}\cap H^{1+d}_{\NN\KK}(\upvarphi)\cap H^2_{\Gi}(\upvarphi)$. Let $t\mapsto G(t)=\int_0^te^{iA\tau} f(\tau) d\tau.$ The map $G\in C^0([0,T], H^{2+d}_{\Gi}(\upvarphi))$ and there exists $C(T)>0$ uniformly bounded for $T$ lying on intervals so that
	$$\|G\|_{L^{\infty}((0,T),H^{2+d}_{\Gi}(\upvarphi))}\leq C(T)\| f\|_{L^2((0,T),H^{2+d})}.$$
	
	\noindent
	{\bf 2)} Let $\psi_0\in H^{2+d}_{\Gi}(\upvarphi)$ and $u\in L^2((0,T),\R)$. There
	exists a unique mild solution $\psi\in C_0([0,T],H^{3}_{\Ti}(\upvarphi))$ of the (\ref{mainx1}) (relation $(\ref{mild})$).
	Moreover, there exists $C=C(T,B,u)>0$ so that, for every $t\in[0,T]$ and $\psi_0\in H^{2+d}_{\Gi}(\upvarphi)$, $$\|\psi\|_{C^0([0,T],H^{2+d}_{\Gi}(\upvarphi))}\leq C\|\psi_0\|_{(2+d)},\ \ \ \ \ \ \|\psi(t)\| =\|\psi_0\|.$$
\end{prop}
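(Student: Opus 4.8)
The plan is to establish Proposition \ref{laura} by mimicking the structure of the proof of Proposition \ref{lauraT}, since that concrete tadpole case is the template: part \textbf{1)} is a linear estimate on the Duhamel integral operator $G$, and part \textbf{2)} upgrades it to a fixed-point/well-posedness statement via Banach's theorem. The essential difference is that instead of the explicit sines $\ffi_k=(\sqrt2\sin(2k\pi x),0)$ on the self-closing edge, I must work abstractly with the eigenbasis $\{\ffi_k\}$ of $A$ on the compact core $\Gi(\upvarphi)$, and replace the explicit integration-by-parts identity for $\la\ffi_k,f\ra$ with the boundary-condition bookkeeping encoded in the spaces $H^{s}_{\NN\KK}(\upvarphi)$ and in the interpolation Proposition \ref{bor}. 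Throughout I may use Remark \ref{normequivalence} to pass freely between $\|\cdot\|_{(s)}$ and $\||A+c|^{s/2}\cdot\|$, which is what lets the abstract spectral calculus replace the hand computation.

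For part \textbf{1)}, I would first expand $\|G(t)\|_{(2+d)}^2=\sum_k |k^{2+d}\int_0^t e^{i\mu_k\tau}\la\ffi_k,f(\tau)\ra\,d\tau|^2$ and integrate by parts in the spatial variable to transfer $2+d$ derivatives (or rather the relevant integer part together with the fractional remainder) onto $f$. Here the hypothesis $f(\tau)\in H^{2+d}\cap H^{1+d}_{\NN\KK}(\upvarphi)\cap H^2_{\Gi}(\upvarphi)$ is exactly calibrated so that the boundary terms produced by integration by parts either vanish (by the $\NN\KK$ continuity and derivative-sum conditions in \eqref{spaces}) or can be controlled. After peeling off the integer derivatives, the leftover fractional-order piece is handled by the interpolation identities of Proposition \ref{bor}, whose three cases mirror the three cases of Assumptions II; the time integral $\int_0^t e^{i\mu_k\tau}(\cdot)\,d\tau$ is then estimated by the Ingham-type/Bessel inequalities of \cite[Proposition B.6]{mio3}, using the spectral-gap structure of $\{\mu_k\}$ on the compact graph. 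A Cauchy--Schwarz step in time produces the factor governing $\sqrt{T}$ and yields $\|G\|_{L^\infty((0,T),H^{2+d}_{\Gi}(\upvarphi))}\le C(T)\|f\|_{L^2((0,T),H^{2+d})}$ with $C(T)$ uniformly bounded on bounded $T$-intervals, and continuity of $t\mapsto G(t)$ follows from dominated convergence exactly as in Proposition \ref{lauraT}.

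For part \textbf{2)}, the Assumptions II range conditions guarantee $B\in L(H^{2+d}_{\Gi}(\upvarphi),\,H^{2+d}\cap H^{1+d}_{\NN\KK}(\upvarphi)\cap H^2_{\Gi}(\upvarphi))$, so that for any $\psi\in C^0([0,T],H^{2+d}_{\Gi}(\upvarphi))$ the function $f=u B\psi$ meets the hypotheses of part \textbf{1)}. I then define the affine map $F(\psi)(t)=e^{iAt}\psi_0-i\int_0^t e^{iA(t-s)}u(s)B\psi(s)\,ds$ on $C^0([0,T],H^{2+d}_{\Gi}(\upvarphi))$ and apply part \textbf{1)} to the difference $F(\psi_1)-F(\psi_2)$, obtaining a Lipschitz constant proportional to $C(T)\,\|u\|_{L^2}\,\iii B\iii_{L(H^{2+d}_{\Gi},H^{2+d})}$. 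When $\|u\|_{L^2((0,T),\R)}$ is small this is a contraction and Banach's fixed point theorem gives the unique mild solution; for general $u$ I partition $[0,T]$ into finitely many subintervals on each of which $\|u\|_{L^2}$ is small and concatenate, as in Proposition \ref{lauraT}. The growth bound $\|\psi\|_{C^0}\le C\|\psi_0\|_{(2+d)}$ comes from the fixed-point iteration together with the part \textbf{1)} estimate, and conservation $\|\psi(t)\|=\|\psi_0\|$ follows from the skew-adjointness of $-iA$ and symmetry of $B$: multiply \eqref{mainx1} by $\psi$, take imaginary parts to get $\dd_t\|\psi\|^2=0$ first for smooth $u$ and then by density for $u\in L^2$.

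The main obstacle I anticipate is part \textbf{1)}: specifically the integration-by-parts step at order $2+d$ with $d$ possibly fractional, where one must verify that the boundary contributions at the internal vertices $V_i(\upvarphi)$ cancel under precisely the conditions built into $H^{1+d}_{\NN\KK}(\upvarphi)\cap H^2_{\Gi}(\upvarphi)$, and then correctly invoke the matching case of Proposition \ref{bor} so that the fractional remainder lands in a space where \cite[Proposition B.6]{mio3} applies. Matching the three regimes of Assumptions II (the ($\Di$/$\NN$), ($\NN$), and ($\Di$) cases, with their differing admissible ranges for $d$ and the auxiliary exponent $d_1$) to the correct interpolation identity is the delicate bookkeeping on which the whole estimate hinges; everything downstream is a routine contraction-mapping argument identical in form to the tadpole proof.
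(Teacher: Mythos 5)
Your part \textbf{2)} and the overall architecture (Duhamel estimate, contraction mapping with time-partitioning when $\|u\|_{L^2}$ is large, and norm conservation by a density argument) coincide with the paper's proof. The genuine gap is in part \textbf{1)}, at precisely the step you flag as ``delicate bookkeeping'': you propose to prove the order-$(2+d)$ estimate directly, integrating by parts ``the integer part'' and handling ``the leftover fractional-order piece'' with the identities of Proposition \ref{bor}. This cannot work as stated. Proposition \ref{bor} is a space-identification result (it says that $H^{s_1+s_2}_{\Gi}(\upvarphi)$ equals an intersection of a Sobolev space with a boundary-condition space); it is not an operator-interpolation theorem, and it cannot absorb fractional growth in a coefficient computation. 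Concretely, after two integrations by parts as in $(\ref{computations})$, the vertex contributions to $k^{2+d}\la\ffi_k,f(s)\ra$ have size
\begin{equation*}
k^{2+d}\,\mu_k^{-2}\,|\dd_x\ffi_k(v)|\;\sim\;k^{2+d}\cdot k^{-4}\cdot k\;=\;k^{d-1},
\end{equation*}
using $\mu_k\sim k^2$ and the uniform bound on $|\mu_k^{-1/2}\dd_x\ffi_k(v)|$. For $d=1$ these terms are $O(1)$ and \cite[Proposition\ B.6]{mio3} closes the estimate; for fractional $d\in(1,3/2)$ they diverge, and you cannot integrate by parts again to gain decay because $f(s)\in H^{2+d}$ with $d<2$ lacks the required fourth derivative. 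The same $k^{d-1}$ loss appears in the bulk term against the orthonormal family $\{\mu_k^{-1/2}\dd_x\ffi_k\}$. So the direct fractional computation is stuck, and no invocation of Proposition \ref{bor} rescues it.

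The paper's resolution, which is absent from your plan, is to prove the estimate at \emph{two integer orders} and then interpolate the \emph{operator}: one shows $G\in C^0([0,T],H^{3}_{\Gi}(\upvarphi))$ when $f(s)\in H^{3}\cap H^{2}_{\Gi}(\upvarphi)$ and $G\in C^0([0,T],H^{5}_{\Gi}(\upvarphi))$ when $f(s)\in H^{5}\cap H^{4}_{\Gi}(\upvarphi)$ (respectively $H^4/H^6$ with the extra vanishing of boundary terms in the ($\NN$) case), and then applies classical interpolation of Banach-space operators (\cite[Theorem\ 4.4.1]{bergolo}) to the linear map $F: f\mapsto G$ between these two levels, obtaining the intermediate orders $d\in[1,3]$ at once. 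Only \emph{after} this operator interpolation does Proposition \ref{bor} enter, to identify the interpolated domain with the hypothesis space $H^{2+d}\cap H^{1+d}_{\NN\KK}(\upvarphi)\cap H^{2}_{\Gi}(\upvarphi)$ in the admissible range of $d$ for each case of Assumptions II. In short: the role you assign to Proposition \ref{bor} is the wrong one, and the actual key idea --- a two-level integer estimate combined with Bergh--L\"ofstr\"om operator interpolation --- is missing; without it the fractional case of part \textbf{1)}, and hence the whole proposition, does not close.
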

\begin{proof} The result is obtained by generalizing the proof of Proposition $\ref{lauraT}$.
	
	\smallskip 
	
	\needspace{3\baselineskip}

	\noindent
	{\bf 1) (a) Assumptions II.1 .} Let $f(s)\in H^{3}\cap H_{\Gi}^2(\upvarphi)$ for almost every $s\in (0,t)$, $t\in(0,T)$ and $f(s)=(f^1(s),...,f^N(s))$. We prove that $G\in C^0([0,T], H^3_{\Gi}(\upvarphi))$. First, $G(t)=\sum_{k=1}^\infty\ffi_k \int_0^t e^{i\mu_ks}\la\ffi_k,f(s)\ra ds$ and
	\begin{equation}\label{Gspectraldecomposition}\begin{split}
	\|G(t)\|_{(3)}&=\Big(\sum_{k\in\N^*}\Big|k^3\int_0^t e^{i\mu_ks}\la\ffi_k,f(s)\ra ds\Big|^2\Big)^\frac{1}{2}.\\
	\end{split}\end{equation}
	We estimate $\la\ffi_k,f(s,\cdot)\ra$ for each $k\in\N^*$ and $s\in(0,t)$. We suppose $\mu_1\neq 0$. Let $\dd_x f(s)=(\dd_x f^1(s),...,\dd_x f^N(s))$ be the derivative of $f(s)$ and $P(\ffi_k)=(P(\ffi^1_k),...,P(\ffi^N_k))$ be the primitive of $\ffi_k$ so that $P(\ffi_k)=-\frac{1}{\mu_k}\dd_{x}\ffi_k.$ We call $\dd e$ the two points of the boundaries of an edge $e$. For every $v\in V_e(\upvarphi)$, $\tilde v	\in V_i(\upvarphi)$ and $j\in N(\tilde v)$, there exist $a(v),a^j(\tilde v)\in\{-1,+1\}$ so that 
	\begin{equation}\label{computations}\begin{split}
	&\la\ffi_k,f(s)\ra	=
	\frac{1}{\mu_k }\int_\Gi\ffi_k(y)\dd_{x}^2f(s,y)dy	=
	\frac{1}{\mu_k^2 }\int_{\Gi(\upvarphi)}\dd_{x}\ffi_k(y)\dd_{x}^3f(s,y)dy\\
	&+\frac{1}{\mu_k^2}\sum_{v\in V_i(\upvarphi)} 
	\sum_{j\in N(v)}a^j(v)\dd_{x}\ffi_k^j(v)\dd_{x}^2f^j(s,v)+\frac{1}{\mu_k^2}\sum_{v\in V_e} 
	a(v)\dd_{x}\ffi_k(v)\dd_{x}^2f(s,v).\\
	\end{split}\end{equation}
	We consider \cite[Lemma  \ 2.3]{mio3} since $\Gi(\upvarphi)$ is a compact graph. There exist $C_1>0$ such that 
$\mu_k^{-2}\leq C_1 k^{-4}$ for every $k\in\N^*$ and
	\begin{equation}\label{computations1}\begin{split}
	&\left|k^3\int_0^t e^{i\mu_ks}\la\ffi_k,f(s)\ra ds\right|
	\leq\frac{C_1}{k}\left(\sum_{v\in V_e(\upvarphi)} 
	\left|\dd_{x}\ffi_k(v)\int_0^t e^{i\mu_ks}\dd_{x}^2f(s,v)ds\right|\right.\\
	&+\sum_{v\in V_i(\upvarphi)} 
	\sum_{j\in N(v)}\left|\dd_{x}\ffi_k^j(v)\int_0^t e^{i\mu_ks}\dd_{x}^2f^j(s,v)ds\right|+
	\\
	& \left.\left|\int_0^t e^{i\mu_ks}\int_{\Gi(\upvarphi)}\dd_{x}\ffi_k(y)\dd_{x}^3f(s,y)dyds\right|\right).\\
	\end{split}\end{equation}
	\begin{osss}\label{Newselfadjointoperator}
		We notice $A'\mu_k^{-1/2}\dd_{x}\ffi_k=\mu_k\mu_k^{-1/2}\dd_{x}\ffi_k$ for every $k\in\N^*,$ where $A'=-\Delta$ is a self-adjoint Laplacian with compact resolvent. 
		Thus, $$\|\mu_k^{-1/2}\dd_{x}\ffi_k\|^2=\la\mu_k^{-1/2}\dd_{x}\ffi_k,\mu_k^{-1/2}\dd_{x}\ffi_k\ra=\la \ffi_k,\mu_k^{-1}A\ffi_k\ra=1$$ and, for almost every $s\in (0,t)$ and $t\in(0,T)$, $\dd_{x}^3f(s,\cdot)\in\overline{span\big\{\mu_k^{-1/2}\dd_{x}\ffi_k:\ k\in\N^*\big\}}^{ L^2}.$
	\end{osss}
	Let ${\bf a^l}=\{a_k^l\},{\bf b^l}=\{b_k^l\}\subset\C$ for $l\leq N$ be so that $\ffi_k^l(x)=a_k^l\cos(\sqrt{\mu_k}x)+ b_k^l\sin(\sqrt{\mu_k}x)$ and $-a_k^l\sin(\sqrt{\mu_k}x)+ b_k^l\cos(\sqrt{\mu_k}x)=\mu_k^{-1/2}\dd_{x}\ffi_k^l(x).$ Now, $$2\geq \|\mu_k^{-1/2}\dd_{x}\ffi_k^l\|_{L^2(e^l)}^2+\|\ffi_k^l\|_{L^2(e^l)}^2=(|a_k^l|^2+|b_k^l|^2)|e_l|$$ for every $k\in\N^*$ and $l\in\{1,...,N\}$. Thus, ${\bf a^l},{\bf b^l}\in\ell^\infty(\C)$ and there exists $C_2>0$ such that, for every $k\in\N^*$ and $v\in V_e\cup V_i$, we have $|\mu_k^{-1/2}\dd_{x}\ffi_k(v)|\leq C_2$. Thanks to the identities $(\ref{Gspectraldecomposition})$, $(\ref{computations1})$ and to Remark $\ref{Newselfadjointoperator}$, there exists $C_3>0$ such that
	\begin{equation}\begin{split}\label{upperboundG}
	\|G(t)\|_{(3)}&\leq C_3\sum_{v\in V_e(\upvarphi)\cup V_i(\upvarphi)}\sum_{j\in N(v)}\Big\|\int_0^t\dd_{x}^2 f^j(s,v)e^{i \mu_{(\cdot)}s}ds \Big\|_{\ell^2}\\
	&+C_3\Big\|\int_0^t\big\la {\mu_{(\cdot)}^{-1/2}}\dd_x\ffi_{(\cdot)}(s),\dd_{x}^3 f(s)\big\ra e^{i \mu_{(\cdot)}s}ds\Big\|_{\ell^2}.
	\end{split}\end{equation}
	Again, as $\Gi(\upvarphi)$ is a compact graph, \cite[Lemma\ 2.4]{mio3} is valid for the sequence $\upmu$ and, 
from \cite[Proposition\ B.6]{mio3}, there exist $C_4(t),C_5(t)>0$ uniformly bounded for $t$ in bounded intervals 
such that
	\begin{equation}\label{upperboundG1}\begin{split}
	\|G\|_{(3)} 
	&\leq C_4(t)\sum_{v\in V_e(\upvarphi)\cup V_i(\upvarphi)}\sum_{j\in N(v)}\|\dd_{x}^2 f^j(\cdot,v)\|_{L^2((0,t),\C)}+\sqrt{t}\|f\|_{L^2((0,t),H^3)}
	\end{split}\end{equation}
	and $\|G\|_{(3)}\leq C_5(t)\|f(\cdot,\cdot)\|_{L^2((0,t),H^3)}$. We underline that the identity is also valid when $\mu_1=0$, which is proved by isolating the term with $k=1$ and by repeating the steps above. 
	For every $t\in [0,T]$, the inequality (\ref{upperboundG1}) shows that $G(t)\in H^3_\Gi(\upvarphi)$. The provided upper bounds are uniform and the Dominated Convergence Theorem leads to $G\in C^0([0,T], H^3_{\Gi}(\upvarphi))$.

	\noindent
	Let $f(s)\in H^{5}\cap H_{\Gi}^4(\upvarphi)$ for almost every $s\in (0,t)$ and $t\in(0,T)$. The same techniques adopted above shows that $G\in C^0([0,T], H^5_{\Gi}(\upvarphi))$.

	\smallskip
	We denote $F(f)(t):=\int_0^{t}e^{iA\tau} f(\tau) d\tau$ for $f\in\Hi$ and $t\in (0,T)$. Let $X(B)$ be the space of functions $f$ so that $f(s)$ belongs to a Banach space $B$ for almost every $s\in (0,t)$ and $t\in(0,T)$. The first part of the proof implies
	$$F: X(H^{3}\cap H_{\Gi}^2(\upvarphi))\longrightarrow C^0([0,T], H^3_{\Gi}(\upvarphi)),$$ $$F: X(H^{5}\cap H_{\Gi}^4(\upvarphi))\longrightarrow C^0([0,T], H^5_{\Gi}(\upvarphi)).$$
	Classical interpolation results (as \cite[Theorem\ 4.4.1]{bergolo} with $n=1$) lead to $F: X(H^{2+d}\cap H_{\Gi}^{1+d}(\upvarphi))\longrightarrow C^0([0,T], H^{2+d}_{\Gi})$ with $d\in [1,3]$. Thanks to Proposition \ref{bor}, if $d\in [1,3/2)$ and $f(s)\in H^{2+d}\cap H^{1+d}_{\NN\KK}(\upvarphi)\cap H^2_{\Gi}(\upvarphi)=H^{2+d}\cap H^{1+d}_{\Gi}(\upvarphi)$ for almost every $s\in (0,t)$ and $t\in(0,T)$, then $G\in C^0([0,T], H^{2+d}_{\Gi}(\upvarphi))$, which achieves the proof.
	
	\smallskip 
	
	\needspace{3\baselineskip}

	\noindent
	{\bf (b) Assumptions II.3 .} If $\Gi(\upvarphi)$ is equipped with ($\Di$), then $H_{\Gi}^2(\upvarphi)= H_{\NN\KK}^2(\upvarphi)\cap H_{\Gi}^1(\upvarphi)$
	and $H_{\Gi}^4(\upvarphi)=H_{\NN\KK}^4(\upvarphi)\cap H_{\Gi}^3(\upvarphi)$ from Proposition $\ref{bor}$. As above, if $f(s)\in H^{3}\cap H_{\NN\KK}^2(\upvarphi)\cap H_{\Gi}^1(\upvarphi)$ for almost every $s\in (0,t)$ and $t\in(0,T)$, then $G\in C^0([0,T], H^3_{\Gi}(\upvarphi))$, while if $f(s)\in H^{5}\cap H_{\NN\KK}^4(\upvarphi)\cap H_{\Gi}^3(\upvarphi)$ for almost every $s\in (0,t)$ and $t\in(0,T)$, then $G\in C^0([0,T], H^5_{\Gi}(\upvarphi))$. 
	From the interpolation techniques, if $d\in [1,5/2)$ and $f(s)\in H^{2+d}\cap H^{1+d}_{\NN\KK}(\upvarphi)\cap H^{d}_{\Gi}(\upvarphi)$ for almost every $s\in (0,t)$ and $t\in(0,T)$, then $G\in C^0([0,T], H^{2+d}_{\Gi}(\upvarphi))$.

	\smallskip 
	
	\needspace{3\baselineskip}

	\noindent
	{\bf (c) Assumptions II.2 .} Let $f(s)\in H^{4}\cap H_{\NN\KK}^3(\upvarphi)\cap H_\Gi^2(\upvarphi)$ for almost every $s\in (0,t)$ and $t\in(0,T)$ and $\Gi(\upvarphi)$ be equipped with $(\NN)$. In this framework, the last line of $(\ref{computations})$ is zero. Indeed, $\dd_x^2f(s)\in C^0$ as $f (s)\in H_{\NN\KK}^3(\upvarphi)$ and, for $v\in V_e(\upvarphi)$, we have $\dd_{x}\ffi_k(v)=0$ thanks to the ($\NN$) boundary conditions (the terms $a^j(v)$ assume different signs according to the orientation of the edges connected in $v$). After, for every $v\in V_i(\upvarphi)$, thanks to the $(\NN\KK)$ in $v\in V_i(\upvarphi)$, we have $\sum_{j\in N(v)}a^j(v)\dd_{x}\ffi_k^j(v)=0$. 
	From $(\ref{computations})$, we obtain
	\begin{equation*}\begin{split}
	\la\ffi_k,f(s)\ra&=-\frac{1}{\mu_k^2 }\int_{\Gi(\upvarphi)}\dd_{x}\ffi_k(y)\dd_{x}^3f(s,y)dy=-\frac{1}{\mu_k^2}\sum_{v\in V_e(\upvarphi)} 
	a(v)\ffi_k(v)\dd_{x}^3f(s,v)\\
	&-\frac{1}{\mu_k^2}\sum_{v\in V_i(\upvarphi)} 
	\sum_{j\in N(v)}a^j(v)\ffi_k^j(v)\dd_{x}^3f^j(s,v)+
	\frac{1}{\mu_k^2 }\int_{\Gi(\upvarphi)}\ffi_k(y)\dd_{x}^4f(s,y)dy.\\
	\end{split}\end{equation*}
	Now, $\{\ffi_k\}_{k\in\N^*}$ is a Hilbert basis of $\Hi(\upvarphi)$ and we proceed as in (\ref{computations1}), (\ref{upperboundG}) and (\ref{upperboundG1}). From \cite[Proposition\ B.6]{mio3}, there exists $C_6(t)>0$ uniformly bounded such that $$\|G\|_{(4)}\leq C_1(t)\|f(\cdot,\cdot)\|_{L^2((0,t),H^4)}.$$

	\smallskip
	
	If $f(s)\in H^{4}\cap H_{\NN\KK}^3(\upvarphi)\cap H_\Gi^2(\upvarphi)$ for almost every $s\in (0,t)$ and $t\in(0,T)$, then $G\in C^0([0,T], H^{4}_{\Gi}(\upvarphi)).$ Equivalently when $f(s)\in H^{6}\cap H_{\NN\KK}^5(\upvarphi)\cap H_\Gi^4(\upvarphi)$ for almost every $s\in (0,t)$ and $t\in(0,T)$, we have $G\in C^0([0,T], H^{6}_{\Gi}(\upvarphi)).$
	As above, from Proposition $\ref{bor}$, if $d\in[2,7/2)$ and $f(s)\in H^{2+d}\cap H^{1+d}_{\NN\KK}(\upvarphi)\cap H^2_{\Gi}(\upvarphi)$ for almost every $s\in (0,t)$ and $t\in(0,T)$, then $G\in C^0([0,T], H^{2+d}_{\Gi}(\upvarphi)).$ 
	
	\medskip
	
	\needspace{3\baselineskip}

	\noindent
	{\bf 2)} As $Ran(B|_{H_{\Gi}^{2+d}(\upvarphi)})\subseteq H^{2+d}\cap H^{1+d}_{\NN\KK}(\upvarphi) H^2_{\Gi}(\upvarphi)\subseteq H^{2+d}$, we have \\
	$B\in L(H^{2+d}_{\Gi}(\upvarphi),H^{2+d})$ thanks to the arguments of \cite[Remark\ 2.1]{mio1}. Let $F:\psi 
\in C^0([0,T],H^{2+d}_{\Gi}(\upvarphi))\mapsto\phi\in C^0([0,T],H^{2+d}_{\Gi}(\upvarphi)) $ with
	$$\phi(t)=F(\psi)(t)=e^{-iAt}-\int_0^te^{-iA(t-s)}u(s) B\psi(s)ds,\ \ \ \ \ \forall t\in [0,T].$$
	For every $\psi_1,\psi_2\in H^{2+d}_{\Gi}(\upvarphi)$, from the first point of the proof, there exists $C(t)>0$ uniformly bounded for $t$ lying on bounded intervals, such that
	\begin{equation*}\begin{split}
	&\|F(\psi_1)(t)-F(\psi_2)(t)\|_{({2+d})}\leq\left\|\int_0^t e^{-iA(t-s)}u(s) B(\psi_1(s)-\psi_2(s))ds\right\|_{({2+d})}\\
	&\leq C(t)\|u\|_{L^2((0,t),\R)}\iii B\iii_{L(H^{2+d}_{\Gi},H^{2+d})} \|\psi_1-\psi_2\|_{L^\infty((0,t),H^{2+d}_\Gi(\upvarphi))}.\\
	\end{split}\end{equation*}
The proof is achieved as in the point {\bf 2.\,\,}of the proof of Proposition $\ref{lauraT}$.\qedhere
\end{proof}

\subsection{Controllability results}
\begin{defi}
Let $\upvarphi:=\{\ffi_k\}_{k\in\N^*}$ be an orthonormal system of $\Hi$ made by eigenfunctions of $A$ and let $\{\mu_{k}\}_{k\in\N^*}$ be the corresponding eigenvalues. 

\begin{enumerate}	
	\item The (\ref{mainx1}) is said to be globally exactly controllable in $H^s_\Gi(\upvarphi)$ with $s\geq 3$ if, for every $\psi_1,\psi_2\in H^s_\Gi(\upvarphi)$ such that $\|\psi_1\|=\|\psi_2\|$, there exist $T>0$ and $u\in L^2((0,T),\R)$ such that $\G_T^u\psi_1=\psi_2.$
	\item	The (\ref{mainx1}) is energetically controllable in $\{\mu_k\}_{{k\in\N^*}}$ if, for every $m,n\in\N^*$, there exist $T>0$ and $u\in L^2((0,T),\R)$ so that $\G_T^u\varphi_m=\varphi_n.$
	\end{enumerate}
\end{defi}
Before proceeding with the main result of the work, we notice the following fact. As $\Gi(\upvarphi)$ is a compact 
graph, \cite[Lemma\ 2.4]{mio3} implies
\begin{equation}\label{weakspectralgap}\begin{split}
\exists \MM\in\N^*,\ \delta>0\ \ :\ \ \ \inf_{{k\in\N^*}}|\mu_{k+\MM}-\mu_k|>\delta \MM\\
\end{split}\end{equation}
(the parameter $\MM$ is equal to $1$ when $\Gi(\upvarphi)$ corresponds to an interval). 
\begin{teorema}\label{globalenergetic}
	Let $\Gi$ be a quantum graph. We assume that 
	\begin{equation}\label{weakgap}\forall \epsilon>0,\ \exists C>0,\ \tilde d\geq 1\ \ :\ \ |\mu_{k+1}-\mu_k|\geq C  k^{-{\tilde d}-1},\ \ \forall k\in\N^*.\end{equation}
	If $(A,B)$ satisfies Assumptions I$(\upvarphi,\eta)$ and Assumptions II$(\upvarphi,\eta,\tilde d-1)$ for $\eta>0$, then the (\ref{mainx1}) is globally exactly controllable in $H^{s}_{\Gi}(\upvarphi)$ for $s=2+d$ with $d$ from Assumptions I and energetically controllable in $\{\mu_k\}_{{k\in\N^*}}.$
\end{teorema}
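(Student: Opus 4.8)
The plan is to follow the three-step structure already established in the proof of Theorem~\ref{globalegirino} for the tadpole graph, now adapted to the generic-graph setting. The statement splits into \emph{local exact controllability} in $H^{2+d}_{\Gi}(\upvarphi)$ near the ground state, a \emph{global} upgrade via approximate controllability, and the \emph{energetic} consequence. The well-posedness established in Proposition~\ref{laura} under Assumptions~II guarantees the propagator $\G_T^u$ is well defined on $H^{2+d}_{\Gi}(\upvarphi)$, so the functional-analytic framework is already in place.

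First I would establish local exact controllability around $\ffi_1$. Introduce the neighborhood $O^{2+d}_{\epsilon,T}$ of $\ffi_1(T)=e^{-i\mu_1 T}\ffi_1$ on the unit sphere of $H^{2+d}_{\Gi}(\upvarphi)$, and reduce the problem to the local surjectivity of the moments map $\alpha_k(u)=\la\ffi_k(T),\G^u_T\ffi_1\ra$ via the Generalized Inverse Function Theorem (\cite[Theorem 1; p. 240]{Inv}). The Fr\'echet derivative at $u=0$ is the sequence
\begin{equation*}
\gamma_k(v)=-i\,B_{k,1}\int_0^T v(\tau)\,e^{i(\mu_k-\mu_1)\tau}\,d\tau,\qquad B_{k,1}=\la\ffi_k,B\ffi_1\ra,
\end{equation*}
so surjectivity of $\gamma$ onto the tangent space $T_{\updelta}Q$ is equivalent to solving the trigonometric moment problem
\begin{equation*}
x_k/B_{k,1}=-i\int_0^T u(\tau)\,e^{i(\mu_k-\mu_1)\tau}\,d\tau,\qquad \{x_k\}\in T_{\updelta}Q\subset h^{2+d}.
\end{equation*}
Assumptions~I.1 gives $|B_{k,1}|\geq C k^{-2-\eta}$, so for $\{x_k\}\in h^{2+d}$ with $d\geq a+\eta=\eta$ the sequence $\{x_k B_{k,1}^{-1}\}$ lands in $\ell^2$ with $x_1 B_{1,1}^{-1}$ appropriately real; solvability then follows from the moment-theoretic result \cite[Proposition B.5]{mio3}, whose gap hypothesis is met because of the weak gap \eqref{weakgap} together with \eqref{weakspectralgap}. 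This is the step where I expect the main obstacle: unlike the tadpole case, there is \emph{no} uniform spectral gap, only the polynomial lower bound $|\mu_{k+1}-\mu_k|\geq C k^{-\tilde d-1}$, so the exponents must be balanced precisely. The Ingham-type estimate in \cite[Proposition B.5]{mio3} loses $\tilde d$ derivatives, which is exactly why the theorem requires $s=2+d$ with $d\geq \tilde d-1+\eta$ coming from Assumptions~II$(\upvarphi,\eta,\tilde d-1)$; I would verify that the regularity budget closes and that Assumption~I.2 (the non-degeneracy of the second-order terms $\la\ffi_j,B\ffi_j\ra-\la\ffi_k,B\ffi_k\ra-\la\ffi_l,B\ffi_l\ra+\la\ffi_m,B\ffi_m\ra$) handles the resonances $\mu_j-\mu_k=\mu_l-\mu_m$ that the absence of a uniform gap makes possible.

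Second, I would globalize. Using an approximate-controllability result of the type invoked as Remark~\ref{approxT} in the tadpole proof, any normalized state in $H^{2+d}_{\Gi}(\upvarphi)$ can be steered into the neighborhood $O^{2+d}_{\epsilon,T}$ of $\ffi_1$. Given $\psi_1,\psi_2$ with $\|\psi_1\|=\|\psi_2\|=p$, I would drive $p^{-1}\psi_1$ and $p^{-1}\psi_2$ into this neighborhood by controls $u_1,u_2$, then apply local controllability twice to send both exactly to $\ffi_1$; composing the propagators (the second one reversed) and using unitarity and time-reversibility yields a single control $\tilde u$ on $[0,\tilde T]$ with $\G^{\tilde u}_{\tilde T}\psi_1=\psi_2$, after rescaling by $p$. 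Finally, energetic controllability in $\{\mu_k\}_{k\in\N^*}$ is immediate: since each $\ffi_k\in H^s_{\Gi}(\upvarphi)$ for every $s>0$ and $\|\ffi_m\|=\|\ffi_n\|=1$, global exact controllability applied to the pair $(\ffi_m,\ffi_n)$ produces $T>0$ and $u$ with $\G^u_T\ffi_m=\ffi_n$, completing the proof.
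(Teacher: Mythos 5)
Your proposal follows essentially the same route as the paper's proof: local exact controllability near $\ffi_1$ via the Generalized Inverse Function Theorem and the moments problem solved by \cite[Proposition\ B.5]{mio3}, with the crucial exponent balance $d\geq \tilde d-1+\eta$ from Assumptions II$(\upvarphi,\eta,\tilde d-1)$ placing $\{x_k B_{k,1}^{-1}\}_{k\in\N^*}$ in $h^{d-\eta}\subseteq h^{\tilde d-1}$ rather than merely in $\ell^2$, followed by globalization through the approximate controllability of Proposition~\ref{approx} and energetic controllability from $\ffi_k\in H^s_{\Gi}(\upvarphi)$. Your only slips are cosmetic and self-corrected: the early line ``$d\geq a+\eta=\eta$'' should read $a+\eta=\tilde d-1+\eta$, and the initial claim that $\{x_k B_{k,1}^{-1}\}$ need only lie in $\ell^2$ is superseded by your later, correct observation that the weak gap \eqref{weakgap} forces the sequence into $h^{\tilde d-1}$, which is precisely the point the paper makes in its step {\bf 1)}.
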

\begin{proof}
{\bf 1) Local exact controllability.} The proof follows as the point {\bf 1.\,\,}of the proof of Theorem $\ref{globalegirino}$ by considering $s=2+d$ instead of $s=3$. The peculiarity of this case is that $\alpha$ assumes value in $Q:=\{{\bf x}:=\{x_k\}_{k\in\N^*}\in h^s(\C)\ |\ \|{\bf x}\|_{\ell^2}=1\},$ while $\gamma$ in $$T_{\updelta}Q=\{{\bf x}:=\{x_k\}_{k\in\N^*}\in h^s(\C)\ |\ ix_1\in\R\}.$$ In the current framework, the moments problem $(\ref{mome1})$ is defined for sequences in $T_{\updelta}Q\subset h^s$ and $\big\{x_k (\la\ffi_k,B\ffi_1\ra)^{-1}\big\}_{k\in\N^*}\in h^{d-\eta}\subseteq h^{\tilde d-1}$ thanks to the point {\bf 1.\,\,}of Assumptions I. 
The solvability of $(\ref{mome1})$ is guaranteed by \cite[Proposition\ B.5]{mio3} thanks to $(\ref{weakgap})$ since
$$\{x_k B_{k,1}^{-1}\}_{k\in\N^*}\in\{\{c_k\}_{k\in\N^*}\in h^{\tilde d-1}(\C)\ |\ c_1\in\R\}.$$

\medskip
\noindent
{\bf 2) Global exact controllability and energetic controllability.} The proof is achieved as in the points {\bf 2.\,\,}and {\bf 3.\,\,}of the proof of Theorem \ref{globalegirino} by using Proposition $\ref{approx}$ (Appendix \ref{approximatecon}).\qedhere
\end{proof}

\section{Example}\label{exampleNOTtadpole}
Let a {\it star graph} be a graph composed by $N\in\N^*$ edges $\{e_j\}_{j\leq N}$. Each edge $e_j$ is \\ parametrized with a coordinate going from $0$ to the length of the edge $L_j$. We set the $0$ in the external vertex belonging to $e_j$.

\begin{figure}[H]
	\centering
	\includegraphics[width=\textwidth-50pt]{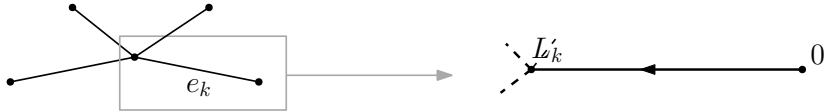}
	\caption{Parametrization of a star graph with $N=4$ edges.}\label{parametrizzazione}
\end{figure}

	Let $\Gi$ be a graph containing as sub-graph a star graph equipped with ($\Di$) and composed by the edges $\{e_j\}_{j\leq 4}$. Let the couple of edges $\{e_{1},e_{2}\}$ be of length $L_1=\sqrt[3]{2}$, while $\{e_{3},e_{4}\}$ be long $L_2=\sqrt[3]{5}$.

		\begin{figure}[H]
		\centering
		\includegraphics[width=\textwidth-50pt]{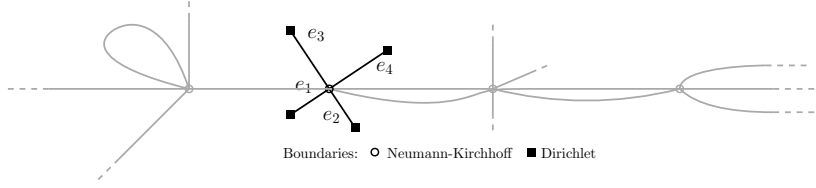}
		\caption{Example of star graph described in Section $\ref{exampleNOTtadpole}$.}
	\end{figure}
	
	\begin{coro}
		Let $B$ be such that $B\psi=((B\psi)^1,...,(B\psi)^N)$ for every $\psi\in\Hi$ and 
	\begin{equation*}
	\begin{split} 
	&(B\psi)^{1}=-(B\psi)^{2}=\sqrt[3]{2}\cos\Big(\frac{\pi x}{3 \sqrt[3]{2} }\Big)\psi^{1}(x)+\sqrt[3]{2}\cos\Big(\frac{\pi x}{3 \sqrt[3]{2}}\Big)\psi^{3}\Big(\frac{\sqrt[3]{5}}{\sqrt[3]{2}}x\Big),\\
	&(B\psi)^{3}=-(B\psi)^{4}=\sqrt[3]{5}\cos\Big(\frac{\pi x}{3 \sqrt[3]{5} }\Big)\psi^{3}(x)+\sqrt[3]{5}\cos\Big(\frac{\pi x}{3 \sqrt[3]{5}}\Big)\psi^{1}\Big(\frac{\sqrt[3]{2}}{\sqrt[3]{5}}x\Big),\\	\end{split}
	\end{equation*}
	while $(B\psi)^l\equiv 0$ for every $5\leq l\leq N$. There exists $\upvarphi:=\{\varphi_{k}\}_{k\in\N^*}$ an orthonormal system composed by eigenfunctions of $A$ such that the (\ref{mainx1}) is globally exactly controllable in $H^{3+\epsilon}_\Gi(\upvarphi)$ with $\epsilon>0$ and energetically controllable in $\big\{\frac{k^2\pi^2}{L_l}\big\}_{\underset{l\leq 2}{k,l\in\N^*}}.$ 
	
\end{coro}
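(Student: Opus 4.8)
The plan is to verify that the given operator $B$ and a suitable orthonormal system $\upvarphi$ satisfy the hypotheses of Theorem \ref{globalenergetic}, so that the conclusion follows immediately. First I would construct the eigenfunctions. Since the star graph is equipped with ($\Di$), the two pairs of equal-length edges allow the construction of \emph{antisymmetric} eigenfunctions: on the pair $\{e_1,e_2\}$ of length $L_1=\sqrt[3]{2}$, set $\ffi_k^1(x)=-\ffi_k^2(x)=c\,\sin\bigl(\frac{k\pi x}{L_1}\bigr)$ (and $\equiv 0$ on the other edges), and similarly on the pair $\{e_3,e_4\}$ of length $L_2=\sqrt[3]{5}$. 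These vanish at the central vertex and hence automatically satisfy Dirichlet at the external vertices and Neumann--Kirchhoff at the center (the antisymmetry forces continuity with value $0$ and cancels the derivative sum). They are eigenfunctions of $A$ with eigenvalues $\frac{k^2\pi^2}{L_l^2}$, $l\in\{1,2\}$, which matches the energetic controllability claim $\bigl\{\frac{k^2\pi^2}{L_l}\bigr\}$ in the statement (up to the obvious squaring of $L_l$). One then relabels this doubly-indexed family as a single sequence $\{\ffi_k\}_{k\in\N^*}$.

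Next I would check the spectral gap condition $(\ref{weakgap})$. The eigenvalues are of the form $\frac{k^2\pi^2}{2^{2/3}}$ and $\frac{k^2\pi^2}{5^{2/3}}$. Because $L_1^2=2^{2/3}$ and $L_2^2=5^{2/3}$ are incommensurable (their ratio $(5/2)^{2/3}$ is irrational, indeed $2,5$ are distinct primes and the cube roots are irrational), the merged ordered spectrum has no exact repetitions, and a classical three-distance / Weyl equidistribution argument controls the minimal gap: one shows $|\mu_{k+1}-\mu_k|\geq C k^{-\tilde d-1}$ for some $\tilde d\geq 1$ and every $\epsilon>0$. This is exactly the polynomial lower bound on gaps required, and it is the weak spectral gap regime for which Theorem \ref{globalenergetic} was designed (the uniform gap fails once two incommensurable interval-spectra are interleaved). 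I would invoke a diophantine estimate on $|k^2 - \lambda m^2|$ with $\lambda=(L_1/L_2)^2$ irrational algebraic, for which Liouville-type bounds give the needed polynomial decay rate, fixing the value of $\tilde d$.

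Then I would verify Assumptions I and Assumptions II for the prescribed $B$. For Assumptions I.1, I compute the matrix elements $\la\ffi_k,B\ffi_1\ra$ explicitly: the cosine factors $\cos\bigl(\frac{\pi x}{3L_l}\bigr)$ in the definition of $B$ are chosen precisely so that, after integrating against products of sines $\sin\bigl(\frac{k\pi x}{L_l}\bigr)\sin\bigl(\frac{\pi x}{L_l}\bigr)$, the coupling $B_{k,1}$ decays only polynomially and never vanishes, yielding $|\la\ffi_k,B\ffi_1\ra|\geq C k^{-2-\eta}$ for a suitable $\eta>0$; the cross-terms coupling $e_1$ to $e_3$ (with the rescaling $\psi^3(\tfrac{L_2}{L_1}x)$) are what make $B$ connect the two incommensurable families, ensuring $\ffi_1$ couples to \emph{every} $\ffi_k$. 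For Assumptions I.2, the non-resonance condition on $\la\ffi_j,B\ffi_j\ra-\la\ffi_k,B\ffi_k\ra-\la\ffi_l,B\ffi_l\ra+\la\ffi_m,B\ffi_m\ra$ holds because equalities $\mu_j-\mu_k=\mu_l-\mu_m$ among the interleaved spectrum are rigid (again using incommensurability), and the diagonal elements are computable and generically distinct. For Assumptions II (here in the ($\Di$) case, item 3, with $a+\eta=\tilde d-1+\eta$ in the allowed range so that $s=2+d=3+\epsilon$), I check that $B$ maps the regularity spaces $H^{2+d}_\Gi(\upvarphi)$ into $H^{2+d}\cap H^{1+d}_{\NN\KK}(\upvarphi)\cap H^2_\Gi(\upvarphi)$: the smoothness of the cosine multipliers and the fact that the antisymmetric eigenfunctions vanish at the vertices guarantee the boundary conditions are preserved. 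With all hypotheses verified, Theorem \ref{globalenergetic} gives global exact controllability in $H^{3+\epsilon}_\Gi(\upvarphi)$ and energetic controllability in the stated eigenvalue set. The main obstacle I expect is the gap estimate: extracting the explicit polynomial decay rate $k^{-\tilde d-1}$ from the interleaving of two irrationally-related quadratic spectra requires a careful diophantine argument, and one must confirm the resulting $\tilde d$ is compatible with the admissible range of $d$ in Assumptions II so that $s=3+\epsilon$ is actually attained.
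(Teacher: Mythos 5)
Your plan follows the same route as the paper: the same antisymmetric eigenfunctions $\ffi_k^{2l-1}=-\ffi_k^{2l}=\sqrt{L_l^{-1}}\sin(\sqrt{\mu_k}\,x)$ supported on the two pairs of equal-length Dirichlet edges, the spectral gap $(\ref{weakgap})$ deduced from the algebraic irrationality of $\sqrt[3]{2^2}/\sqrt[3]{5^2}$ (the paper does not redo a diophantine argument but cites \cite[Proposition A.1, Lemma 2.6]{mio3}), the explicit computation of $\la\ffi_1,B\ffi_k\ra$ exploiting the rescaled cross-terms to couple the two incommensurable families (the paper gets the closed form $\frac{3^3\,2^{5/3}\sqrt{3}\,m(k)}{(64-180m(k)^2+81m(k)^4)\pi}$, hence $\eta=1$), the boundary-condition check for Assumptions II, and the final appeal to Theorem \ref{globalenergetic}.

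There is, however, one genuine gap: your verification of Assumptions I.2. You assert that the diagonal elements are \virgolette{computable and generically distinct}, but genericity is not the issue -- the condition must hold for \emph{every} resonant quadruple, and resonant quadruples do exist inside each family: for instance $9^2-7^2=6^2-2^2$ produces $\mu_j-\mu_k=\mu_l-\mu_m$ with all four eigenfunctions on the same pair of edges, so the condition is a nontrivial constraint that cannot be dismissed. The paper's argument has two steps. First (which you do have), irrational independence confines any resonant quadruple to a single family, so the resonance becomes $m(j)^2-m(k)^2-m(l)^2+m(m)^2=0$. Second (which you are missing), since $B_{k,k}=\frac{3^3L^2\sqrt{3}\,m(k)^2}{(36m(k)^2-1)\pi}$ and $\frac{m^2}{36m^2-1}=\frac{1}{36}\big(1+\frac{1}{36m^2-1}\big)$, the nonvanishing of the alternating sum of the $B_{k,k}$ reduces to the fact that the alternating sum of the reciprocals $\frac{1}{36m^2-1}$ cannot vanish; this is exactly Remark \ref{reciprocal} (distinct reals with $a+b=c+d$ satisfy $1/a+1/b\neq 1/c+1/d$), applied to the numbers $36m^2-1$, whose alternating sum also vanishes by the resonance. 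Without this reciprocal trick, or an equivalent argument, Assumptions I.2 is unverified and Theorem \ref{globalenergetic} cannot be invoked, so the proof is incomplete at precisely the point where the specific algebraic form of the diagonal entries matters. The rest of your proposal, including the caveat about matching $\tilde d$ from the gap estimate with the admissible range of $d$ in Assumptions II, is sound and parallels the paper.
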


\begin{proof}
	Let $\upvarphi=\{\ffi_k\}_{k\in\N^*}$ be some eigenfunctions of $A$ and $\upmu=\{\mu_{k}\}_{k\in\N^*}$ the corresponding eigenvalues. We define $\upvarphi$ and $\upmu$ so that, for every $k\in\N^*$, there exist $m(k)\in\N^*$ and $l(k)\in\{1,2\}$ so that $\ffi_k^{n}\equiv 0$ for $n\neq 2l(k),$ $2l(k)-1$ and
	$$\mu_k={m(k)^2\pi^2}{L^{-2}_{l(k)}},\ \  \ \ \ffi_k^{2l(k)-1}(x)=-\ffi_k^{2l(k)}(x)=\sqrt{{L_{l(k)}^{-1}}}\sin{(\sqrt{\mu_k} x)}.$$

	\noindent
	{\bf Spectral behaviour.} We notice that $\{1,\sqrt[3]{2},\sqrt[3]{5}\}$ are irrationally independent and 
$\frac{\sqrt[3]{2^2}}{\sqrt[3]{5^2}}$ is an algebraic irrational number. As in the proof of \cite[Lemma\ 
2.6]{mio3}, thanks \cite[Proposition\ A.1]{mio3}, for every $\epsilon>0,$ there exist $C>0$ and $\tilde d\geq 0$ 
such that
	$$|\mu_{k+1}-\mu_k|\geq C  k^{-{\tilde d}},\ \ \ \ \ \ \ \ \ \ \ \ \forall k\in\N^*.$$
	{\bf Assumptions I.1} For $[r]$ the entire part of $r\in\R^+$, we have
	\begin{equation*} \begin{split}
			&|\la\ffi_1, B\ffi_k\ra|=\Bigg|\sum_{l=1}^4\int_0^{L_{[(l+1)/2]}} \ffi_k^{l}(x)\sum_{n=1}^{2}{L_l}\cos\Big(\frac{\pi x}{3 L_{[(l+1)/2]}}\Big)\ffi_1^{2n-1}\Big(\frac{L_{n}}{L_{[(l+1)/2]}}x\Big)dx\Bigg|\\
			&=\Big|\int_0^{L_{l(k)}}2L_{l(k)}\cos\Big(\frac{\pi x}{3L_{l(k)}}\Big)\sin\Big(\frac{m(1)\pi x}{L_{l(k)}}\Big)\sin\Big(\frac{m(k)\pi x}{L_{l(k)}} \Big) dx\Big|\\
			&\geq {2^{5/3}}\Big|\int_0^1 \cos\Big(\frac{\pi x}{3}\Big)\sin(\pi x)\sin(m(k)\pi x) dx\Big|=\frac{3^3\,{2^{5/3}} \,\sqrt{3} m(k) }{ (64 - 
			180 m(k)^2 + 81 m(k)^4) \pi}.\\
			\end{split}\end{equation*}
			The last relation implies the existence of $C_1>0$ such that $\la\ffi_1,B\ffi_k\ra\geq C/k^3$ for every $k\in\N^*$ and the point {\bf 1.\,\,}of Assumptions I($\upvarphi,1$) is verified.
			
			\smallskip
			\noindent{\bf Assumptions I.2} We prove that the point {\bf 2.\,\,}of Assumptions I($\upvarphi,1$) is satisfied. 
			By direct computation, it follows $$B_{k,k}:=\la\ffi_k,B\ffi_k\ra=\frac{3^3 L_{l(k)}^2 \sqrt{3} m(k)^2}{(-1+36 m(k)^2) \pi},\ \ \ \ \ \forall k\in\N^*.$$ For $(k,j),(m,n)\in I:=\{(k,j)\in(\N^*)^2:j\neq k\}$ so that $(k,j)\neq(m,n)$ and $\mu_k-\mu_j-\mu_m+\mu_n= 0$, we have $$L_{l(k)}=L_{l(j)}=L_{l(m)}=L_{l(n)}.$$ Indeed, the identity $L_{l(k)}\neq L_{l(j)}$ is never verified as it would imply $$m(k)^2=\frac{L_{l(k)}^2m(j)^2}{L_{l(j)}^2}+\frac{L_{l(k)}^2m(m)^2}{L_{l(m)}^2}-\frac{L_{l(k)}^2m(n)^2}{L_{l(n)}^2}\not\in\N^*.$$ 
			\begin{osss}\label{reciprocal} We notice that, for every $a,b,c,d\in\R$ different numbers, such that $a+b=c+d$, it holds $1/a+1/b\neq 1/c+1/d$. Indeed, we have $$1/a+1/b= (b+a)/(ab)=(d+c)/(ab)\neq (d+c)/(cd)=1/c+1/d,\ \ \ \ \ \text{ if }\ \ \ \ cd\neq ab.$$ Now, if $cd= ab$, then $a^2-c^2=d^2-b^2$ and $a+c=d+b$ since $a-c=d-b$, which is impossible as $2a\neq 2d$.\end{osss}
			
			\noindent
			In conclusion, $\mu_k-\mu_j-\mu_m+\mu_n= 0$ implies $k^2-j^2-m^2+n^2= 0$ and then $$k^{-2}-j^{-2}-m^{-2}+n^{-2}\neq 0.$$ Thus, $B_{k,k}-B_{j,j}-B_{m,m}+B_{n,n}\neq 0$ and Assumptions I($\upvarphi,1$) is valid.

			\smallskip
			
			\noindent{\bf Assumptions II.1 and conclusion.} Theorem $\ref{globalenergetic}$ leads to the statement since the point {\bf 2.\,\,}of Assumptions I($\upvarphi,1$) is satisfied thanks to Proposition $\ref{bor}$. Indeed, $B$ stabilizes $H^m$ for every $m>0$ and $H^2_\Gi(\upvarphi)$ since, for every $\psi\in H^2_\Gi(\upvarphi)$, $$(B\psi)^1(L_1)=(B\psi)^2(L_1)=(B\psi)^3(L_2)=(B\psi)^4(L_2)=0,$$ $$\dd_x(B\psi)^1(L_1)+\dd_x(B\psi)^2(L_1)+\dd_x(B\psi)^3(L_2)+\dd_x(B\psi)^4(L_2)=0. 
			$$ \qedhere
\end{proof}
\begin{osss}\label{generic}
As in \cite[Section\ 6]{mio4}, the techniques just developed are valid when $\Gi$ contains suitable sub-graphs 
denoted \virgolette{uniform chains}. A {uniform chain} is a sequence of edges of equal length $L$ connecting 
$M\in\N^*$ vertices $\{v_j\}_{j\leq M}$ such that $v_2,...,v_{M-1}\in V_i$. We also assume that either 
$v_1,v_{M}\in V_e$ are equipped with ($\Di$), $v_1=v_{M}\in V_i$, or $M=3$ and $v_1,v_3\in V_e$ are equipped with 
($\NN$). 
\begin{figure}[H]
\centering

\includegraphics[width=\textwidth-50pt]{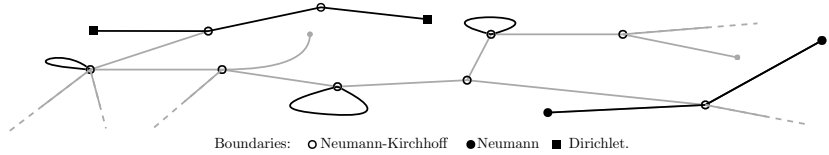}
\caption{Uniform chains contained in a generic graph.}\label{bas3}
\end{figure}

\noindent
Let $\Gi$ contain ${\widetilde N}\in\N^*$ {\it uniform chains} $\{\widetilde\Gi_j\}_{j\leq\widetilde N}$, composed by edges of lengths $\{L_j\}_{j\leq \widetilde N}\in\AL\LL(\widetilde N)$. Let $I_1\subseteq\{1,...,\widetilde N\}$ and $I_2\subseteq\{1,...,\widetilde N\}\setminus I_1$ be respectively the sets of indices $j$ such that the external vertices of $\widetilde\Gi_j$ are equipped with $(\NN)$ and $(\Di)$, while $I_3:=\{1,...,\widetilde N\}\setminus (I_1\cup I_2)$. 
If $\{L_j\}_{j\leq \widetilde N}\in\AL\LL(\widetilde N)$, then the energetic controllability can be guaranteed in 
{\small{	$$\Big\{\frac{(2k-1)^2\pi^2}{4L_j^2}\Big\}_{\underset{j\in I_1}{k,j\in\N^*}}\cup \Big\{\frac{k^2\pi^2}{L^2_j}\Big\}_{\underset{j\in I_2}{k,j\in\N^*}}\cup \Big\{\frac{(2k-1)^2\pi^2}{L^2_j}\Big\}_{\underset{j\in I_3}{k,j\in\N^*}}.$$}}
\end{osss}

\medskip
\noindent
{\bf Acknowledgments.} 
The second author has been financially supported by the ISDEEC project by ANR-16-CE40-0013.

\appendix\section{Analytic perturbation}\label{analitics}
We adapt the perturbation theory from \cite[Appendix\ B]{mio1} as done in \cite[Appendix\ C]{mio3}. Indeed, \cite{mio1} considers the (\ref{mainx1}) on $\Gi=(0,1)$ and $A$ is the Dirichlet Laplacian. As in \cite[Appendix\ B]{mio1}, we decompose $$u(t)=u_0+u_1(t),\ \ \ \ \ \ A+u(t)B=A+u_0B+u_1(t)B,\ \ \ \ \ u_0\in\R,\ u_1\in L^2((0,T),\R).$$ We consider $u_0B$ as a perturbative term of $A$.
Let us consider the (\ref{mainx1}) with $\Gi$ a quantum graph. Let $\upvarphi:=\{\ffi_k\}_{k\in\N^*}$ be an orthonormal system of $\Hi$ made by eigenfunctions of $A$ and let $\{\mu_{k}\}_{k\in\N^*}$ be the relative eigenvalues. Let $\{\ffi_j^{u_0}\}_{j\in\N^*}$ be an orthonormal system in $\Hi(\upvarphi):=\overline{span\{\varphi_k\ |\ k\in\N^*\}}^{\ L^2}$ made by eigenfunctions of $A+u_0B$ and $\{\mu_{k}^{u_0}\}_{k\in\N^*}$ be the relative eigenvalues. 

\begin{oss}
From (\ref{weakspectralgap}), we notice that there does not exist $\MM$ consecutive $k\in\N^*$ such that $|\mu_{k+1}  -   \mu_{k}| < \delta.$ This fact leads to a partition of $\N^*$ in subsets that we call $E_{m}$ with $m\in\N^*$. 
By definition, for every $m\in\N^*$, if $k,n\in E_m$, then $|\mu_{k}-\mu_n|<\delta (\MM-1),$
while if $k\in E_m$ and $n\not\in E_m$, then $|\mu_{k}-\mu_n|\geq \delta.$
This also defines an equivalence relation in $\N^*$ such that $k,n\in \N^*$ are equivalent if and only if there exists $m\in\N^*$ such that $k,n\in E_m.$ The sets $\{E_m\}_{m\in\N^*}$ are the corresponding equivalence classes and $i(m):=|E_m|\leq \MM-1$. 
\end{oss}

\smallskip
We denote as
$n:\N^*\rightarrow\N^*$ the application mapping $j\in\N^*$ in $n(j)\in\N^*$ such that $j\in E_{n(j)}$, while $s:\N^*\rightarrow \N^*$ is such that $\mu_{s(j)}=\inf\{\mu_k>\mu_j\ |\ k\notin E_{n(j)}\}$. Moreover, $p:\N^*\rightarrow \N^*$ is so that $\mu_{p(j)}=\sup\{k\in E_{n(j)}\}$.
Let $j\in\N^*$ and $P_j^{\bot}$ be the projector onto $\overline{span\{\ffi_m\ :\ m\not\in E_{n(j)}\}}^{\ L^2}.$ We define $\Pi:\Hi\rightarrow\Hi(\upvarphi)$ the orthogonal projector.

\begin{lemma}\label{bound}
	Let the hypotheses of Theorem $\ref{globalenergetic}$ be satisfied. There exists a neighborhood $U(0)$ of $u=0$ in $\R$ such that there exists $ c>0$ so that
	
	$$\iii((A+u_0B-\nu_k)\Pi)^{-1}\iii\leq c,\ \ \ \ \ \  \ \nu_k:=(\mu_{s(k)}-\mu_{p(k)})/2,\ \ \ \ \ \forall u_0\in U(0),\ \forall k\in\N^*.$$
	Moreover, for $u_0\in U(0)$, the operator $(A+u_0P_{k}^{\bot}B-\mu_k^{u_0})\Pi$ is invertible with bounded inverse from $H^2_\Gi(\upvarphi)\cap Ran(P_{k}^{\bot})$ to $Ran(P_{k}^{\bot})$ for every $ k\in\N^*$.
\end{lemma}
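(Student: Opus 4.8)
The plan is to work throughout in $\Hi(\upvarphi)$, where $A$ restricts to a self-adjoint operator with pure point spectrum $\{\mu_j\}_{j\in\N^*}$ and where $B$ acts as a bounded symmetric operator preserving the space by Assumptions I. On such a space the spectral theorem gives, for any $\lambda\notin\{\mu_j\}_{j\in\N^*}$, the exact identity $\iii((A-\lambda)\Pi)^{-1}\iii=1/\mathrm{dist}(\lambda,\{\mu_j\}_{j\in\N^*})$. Both assertions then reduce to (i) locating the relevant spectral parameters ($\nu_k$ for the first, $\mu_k^{u_0}$ for the second) at a distance bounded below \emph{uniformly in $k$} from the spectrum of $A$ on the appropriate invariant subspace, and (ii) absorbing the term $u_0B$ by a Neumann series. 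The uniform weak gap (\ref{weakspectralgap}) and the uniform bound $\iii B\iii$ are precisely what make the neighbourhood $U(0)$ and the constant $c$ independent of $k$.

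For the first statement, recall that $\nu_k$ is the midpoint of the spectral gap separating the cluster $E_{n(k)}$ from the eigenvalues lying immediately above it; by the very construction of the classes $\{E_m\}_{m\in\N^*}$ this gap has width at least $\delta$, so $\mathrm{dist}(\nu_k,\{\mu_j\}_{j\in\N^*})\geq\delta/2$ for every $k$. Hence $\iii((A-\nu_k)\Pi)^{-1}\iii\leq 2/\delta$ uniformly in $k$. Writing $(A+u_0B-\nu_k)\Pi=(A-\nu_k)\Pi\,\big(I+u_0((A-\nu_k)\Pi)^{-1}B\Pi\big)$ and choosing $U(0)$ so small that $|u_0|\,(2/\delta)\,\iii B\iii\leq 1/2$ for $u_0\in U(0)$, the second factor is invertible by a Neumann series of norm at most $2$, and one obtains the claimed bound with $c=4/\delta$, valid simultaneously for all $k$.

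For the second statement, the key observation is that the eigenvalues are stable under the self-adjoint perturbation $u_0B$ uniformly in $k$: by the min-max principle, $|\mu_k^{u_0}-\mu_k|\leq|u_0|\,\iii B\iii$ for every $k$. Since $\mu_k$ belongs to the cluster $E_{n(k)}$ while the spectrum of $A$ on $Ran(P_{k}^{\bot})$ is exactly $\{\mu_m:m\notin E_{n(k)}\}$, which sits at distance at least $\delta$ from $E_{n(k)}$, we get $\mathrm{dist}\big(\mu_k^{u_0},\{\mu_m:m\notin E_{n(k)}\}\big)\geq\delta-|u_0|\,\iii B\iii\geq\delta/2$ after possibly shrinking $U(0)$. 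Thus $(A-\mu_k^{u_0})$ is boundedly invertible on $Ran(P_{k}^{\bot})$ with norm at most $2/\delta$, and a second Neumann series absorbs $u_0P_{k}^{\bot}B$; because $B$ and $P_{k}^{\bot}$ (a spectral projector of $A$) both preserve $H^2_\Gi(\upvarphi)$, the resulting inverse is bounded from $H^2_\Gi(\upvarphi)\cap Ran(P_{k}^{\bot})$ to $Ran(P_{k}^{\bot})$. The main obstacle throughout is exactly this uniformity in $k$: both the gap width and $\iii B\iii$ must be controlled independently of $k$, and $U(0)$ has to be fixed once and for all so that the two Neumann series converge for every cluster at once.
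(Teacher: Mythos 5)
Your proof is correct and is essentially the argument the paper appeals to: its one-line proof simply cites Lemmas B.2 and B.3 of \cite{mio1}, whose content is exactly your combination of an exact resolvent-distance bound at the mid-gap points, eigenvalue stability $|\mu_k^{u_0}-\mu_k|\leq |u_0|\iii B\iii$ under the bounded symmetric perturbation, and Neumann series, adapted (as you do) to the cluster structure $\{E_m\}_{m\in\N^*}$ and the uniform inter-cluster gap $\delta$ coming from (\ref{weakspectralgap}). Note only that you tacitly read $\nu_k$ as the gap midpoint $(\mu_{s(k)}+\mu_{p(k)})/2$ rather than the half-width $(\mu_{s(k)}-\mu_{p(k)})/2$ literally printed in the statement; that reading is the correct one (with the literal formula the uniform bound would in general fail, since nothing prevents $(\mu_{s(k)}-\mu_{p(k)})/2$ from landing on, or arbitrarily close to, an eigenvalue), so this is a sign typo in the paper rather than a flaw in your argument.
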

\begin{proof}The claim follows as \cite[Lemma\ B.2\ \&\ Lemma\ B.3]{mio1}.\end{proof} 
\begin{lemma}\label{chain}
	Let the hypotheses of Theorem \ref{globalenergetic} be satisfied. There exists a neighborhood $U(0)$ of $u=0$ in $\R$ such that, up to a countable subset $Q$ and for every $(k,j),(m,n)\in I:=\{(j,k)\in(\N^*)^2:j\neq k\},\ (k,j)\neq(m,n)$,
	$$\mu_k^{u_0}-\mu_j^{u_0}-\mu_m^{u_0}+\mu_n^{u_0}\neq 0,\ \ \ \ \ \ \ \ \la\ffi_k^{u_0},B\ffi_j^{u_0}\ra\neq 0,\ \ \ \ \ \ \  \ \forall u_0\in U(0)\setminus Q.$$
\end{lemma}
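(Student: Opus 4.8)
The plan is to read the statement as an application of one-parameter, self-adjoint analytic perturbation theory in the sense of Kato--Rellich, and then to exploit the rigidity of analytic functions: a scalar analytic function on an interval either vanishes identically or has only isolated (hence countably many) zeros.

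First I would record that, since $\Gi(\upvarphi)$ is a compact graph, $A$ has compact resolvent in $\Hi(\upvarphi)$ and, $B$ being bounded and symmetric with $B(\Hi(\upvarphi))\subseteq\Hi(\upvarphi)$, the map $u_0\mapsto A+u_0B$ is a self-adjoint holomorphic family of type (A). The resolvent bounds of Lemma \ref{bound}, uniform over the spectral blocks $E_m$, are exactly what guarantees that on a suitable neighborhood $U(0)$ of $u_0=0$ one may choose the eigenvalues $u_0\mapsto\mu_k^{u_0}$ and an orthonormal system of eigenfunctions $u_0\mapsto\ffi_k^{u_0}$, depending analytically on $u_0$ for every $k\in\N^*$ simultaneously, with $\mu_k^0=\mu_k$ and $\ffi_k^0=\ffi_k$. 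Consequently, for each fixed pair $(k,j)\in I$ and each admissible quadruple $(k,j,m,n)$, the functions
$$f_{k,j,m,n}(u_0):=\mu_k^{u_0}-\mu_j^{u_0}-\mu_m^{u_0}+\mu_n^{u_0},\qquad g_{k,j}(u_0):=\la\ffi_k^{u_0},B\ffi_j^{u_0}\ra$$
are analytic on $U(0)$.

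Next I would show that each of these countably many functions is not identically zero, so that each has a countable zero set in $U(0)$. For the spectral-gap functions I would invoke the Feynman--Hellmann identity $\tfrac{d}{du_0}\mu_k^{u_0}=\la\ffi_k^{u_0},B\ffi_k^{u_0}\ra$, valid along the analytic branch: if $f_{k,j,m,n}(0)\neq0$ we conclude by continuity, whereas if $f_{k,j,m,n}(0)=0$ then $\mu_k-\mu_j-\mu_m+\mu_n=0$, and
$$f_{k,j,m,n}'(0)=\la\ffi_k,B\ffi_k\ra-\la\ffi_j,B\ffi_j\ra-\la\ffi_m,B\ffi_m\ra+\la\ffi_n,B\ffi_n\ra\neq0$$
by point 2 of Assumptions I, so $f_{k,j,m,n}\not\equiv0$. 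For the coupling functions, those involving the ground state are immediate, since $g_{k,1}(0)=\la\ffi_k,B\ffi_1\ra\neq0$ by point 1 of Assumptions I (this alone already forces the relevant coupling graph to be connected); for a general pair one argues that $g_{k,j}$ cannot vanish identically, seeding the analysis with the non-vanishing ground-state couplings exactly as in \cite[Lemma\ B.2\ \&\ Lemma\ B.3]{mio1} and \cite[Appendix\ C]{mio3}. Finally I would take $Q$ to be the union, over all pairs $(k,j)\in I$ and all admissible quadruples, of the (isolated, hence countable) zero sets of the $f$'s and $g$'s; being a countable union of countable sets, $Q$ is countable, and for every $u_0\in U(0)\setminus Q$ both non-resonance identities and all coupling conditions hold at once.

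The main obstacle is the coupling statement. Unlike the gap functions, $g_{k,j}$ may genuinely vanish at $u_0=0$, since Assumptions I only controls couplings to $\ffi_1$, and its first derivative at $0$ need not be nonzero because of possible cancellations among the second-order perturbation terms. Establishing $g_{k,j}\not\equiv0$ for \emph{every} pair is therefore the delicate point, and it is precisely where the analytic-perturbation machinery of the cited references (rather than a one-line Feynman--Hellmann computation) must be used; once this non-vanishing is in hand, the countability argument closes the proof, after shrinking $U(0)$ to lie inside the neighborhood furnished by Lemma \ref{bound}.
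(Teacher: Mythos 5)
Your proposal follows essentially the same skeleton as the paper's proof: analyticity of $u_0\mapsto\mu_k^{u_0}$ and $u_0\mapsto\ffi_k^{u_0}$ on a common neighborhood, non-constancy of each gap function $u_0\mapsto\mu_k^{u_0}-\mu_j^{u_0}-\mu_m^{u_0}+\mu_n^{u_0}$ forced at first order by point 2 of Assumptions I, and conclusion by taking $Q$ to be a countable union of discrete zero sets. The differences lie in execution, and they cut both ways. Where you cite Kato--Rellich type-(A) theory and the Feynman--Hellmann identity, the paper runs the perturbation by hand: it writes $\ffi_k^{u_0}=a_k\ffi_k+\sum_{j\in E_{n(k)}\setminus\{k\}}\beta_j^k\ffi_j+\eta_k$, uses Lemma \ref{bound} to obtain $\|\eta_k\|\leq C|u_0|$, and derives $\mu_k^{u_0}=\widetilde a_k\mu_k+u_0\widehat a_k\la\ffi_k,B\ffi_k\ra+u_0f_k+O(u_0^2)$ with $\widetilde a_k,\widehat a_k\rightarrow 1$ and $f_k\rightarrow 0$ \emph{uniformly in} $k$. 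This uniformity is exactly what produces a single neighborhood $U(0)$ on which all of the infinitely many quadruple functions are simultaneously analytic and non-constant, even though the gaps $|\mu_{k+1}-\mu_k|$ are only bounded below by $Ck^{-\tilde d-1}$; you correctly attribute this role to Lemma \ref{bound}, but a bare appeal to type-(A) theory would not supply it, since the radius of analyticity of an individual eigenvalue branch degenerates with its isolation distance. Concerning what you single out as the main obstacle --- non-vanishing of $\la\ffi_k^{u_0},B\ffi_j^{u_0}\ra$ for pairs not involving the ground state --- your suspicion is accurate, but the paper does not resolve it either: its entire treatment of the second relation is the assertion that the analytic function $u_0\mapsto\la\ffi_j^{u_0},B\ffi_k^{u_0}\ra$ is not identically zero \virgolette{since $\la\ffi_j,B\ffi_k\ra\neq 0$}, a fact that Assumptions I justify only when one of the two indices equals $1$. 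So on this point your proposal is no weaker than the paper's own argument; note also that where Lemma \ref{chain} is actually invoked, in the final step of the proof of Proposition \ref{approx}, the non-vanishing of the couplings with $\ffi_1^{u_0}$ alone already suffices to produce a non-degenerate chain of connectedness.
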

\begin{proof}
	For $k\in\N^*$, we decompose $\ffi_k^{u_0}=a_k\ffi_k+\sum_{j\in E_{n(k)}\setminus\{k\}}\beta_j^k\ffi_j +\eta_k,$ where $a_k\in\C$, $\{\beta_j^k\}_{j\in\N^*}\subset\C$ and $\eta_k$ is orthogonal to $\ffi_l$ for every $l\in E_{n(k)}$. Moreover, $\lim_{|u_0|\rightarrow 0}|a_k|=1$ and $\lim_{|u_0|\rightarrow 0}|\beta_j^k|=0$ for every $j,k\in\N^*$ and
	\begin{equation*}
	\begin{split}
	&\mu_k^{u_0}\ffi_k^{u_0}=
	(A+u_0B)(a_k\ffi_k+\sum_{j\in E_{n(k)}\setminus\{k\}}\beta_j^k\ffi_j+\eta_k)=Aa_k\ffi_k\\
	&+\sum_{j\in E_{n(k)}\setminus\{k\}}\beta_j^kA\ffi_j+A\eta_k+
	u_0Ba_k\ffi_k+u_0\sum_{j\in E_{n(k)}\setminus\{k\}}\beta_j^kB\ffi_j+u_0B\eta_k.\\
	\end{split}\end{equation*}
	Now, Lemma $\ref{bound}$ leads to the existence of $C_1>0$ such that, for every $k\in\N^*$,
	\begin{equation}\begin{split}\label{normeta}
	\eta_k=&-\big(\big(A+u_0P_{k}^{\bot}B-\mu_k^{u_0}
	\big)P_{k}^{\bot}\big)^{-1}u_0
	\Big(a_kP_{k}^{\bot}B\ffi_k+\sum_{j\in E_{n(k)}\setminus\{k\}}\beta_j^kP_{k}^{\bot}B\ffi_j\Big)
	\end{split}\end{equation}
	and $\|\eta_k\|\leq{C_1|u_0|}.$ 
	Let $B_{l,m}=\la\ffi_l,B\ffi_m$ for every $l,m\in\N^*$. We compute $\mu_k^{u_0}=\la\ffi_k^{u_0},(A+u_0B)\ffi_k^{u_0}\ra$ and
	\begin{equation*}\begin{split}
	&\mu_k^{u_0}=|a_k|^2\mu_k
	+\la\eta_k,(A+u_0B)\eta_k\ra+
	\sum_{j\in E_{n(k)}\setminus\{k\}}\mu_j|\beta_j^k|^2+u_0\sum_{j\in E_{n(k)}\setminus\{k\}}|\beta_j^k|^2B_{k,k}\\
	&+u_0\sum_{j,l\in E_{n(k)}\setminus\{k\}\ j\neq l}\overline{\beta_j^k}\beta_l^kB_{j,l}+u_0\sum_{j\in E_{n(k)}\setminus\{k\}}|\beta_j^k|^2(B_{j,j}-B_{k,k})+u_0|a_k|^2B_{k,k}\\
	&+2u_0 \Re\Big(\sum_{j\in E_{n(k)}\setminus\{k\}}\beta_j^k\la\eta_k,B\ffi_j\ra+\overline{a_k}
	\sum_{j\in E_{n(k)}\setminus\{k\}}\beta_j^kB_{k,j}+\overline{a_k}\la \ffi_k,B\eta_k\ra\Big).\\
	\end{split}\end{equation*}
	Thanks to $(\ref{normeta})$, it follows $\la\eta_k,(A+u_0B)\eta_k\ra=\mu_k^{u_0}\|\eta_k\|^2+O(u_0^2)$. Let 
	$$\widehat a_k:=\frac{|a_k|^2+\sum_{j\in E_{n(k)}\setminus\{k\}}|\beta_j^k|^2}{1-\|\eta_k\|^2},\ \ \ \ \ \ \widetilde a_k:=\frac{|a_k|^2+\sum_{j\in E_{n(k)}\setminus\{k\}}\mu_j/\mu_k|\beta_j^k|^2}{1-\|\eta_k\|^2}.$$ As $\|\eta_k\|\leq{C_1|u_0|}$, it follows $\lim_{|u_0|\rightarrow 0}|\widehat a_k|=1$ uniformly in $k$. Thanks to	$$\lim_{k\rightarrow +\infty}\inf_{j\in E_{n(k)}\setminus\{k\}} {\mu_j}{\mu_k}^{-1}=\lim_{k\rightarrow +\infty}\sup_{j\in E_{n(k)}\setminus\{k\}} {\mu_j}{\mu_k}^{-1}=1,$$
	we have $\lim_{|u_0|\rightarrow 0}|\widetilde a_k|=1$ uniformly in $k$.	Now, there exists $f_{k}$ such that
	\begin{equation}\label{perturbatedeigenvalues}\begin{split}
	\mu_k^{u_0}&=\widetilde a_k\mu_k+u_0\widehat a_kB_{k,k}+u_0f'_k+O(u_0^2)\\
	\end{split}\end{equation}
	where $\lim_{|u_0|\rightarrow 0}f_{k}=0$ uniformly in $k$. {When $\mu_k=0$, the identity $(\ref{perturbatedeigenvalues})$ is still valid}. For each $(k,j),(m,n)\in I$ such that $(k,j)\neq(m,n)$, there exists $f_{k,j,m,n}$ such that $\lim_{|u_0|\rightarrow 0}f_{k,j,m,n}=0$ uniformly in $k,j,m,n$ and
	\begin{equation*}
	\begin{split}
	&\mu_k^{u_0}-\mu_j^{u_0}-\mu_m^{u_0}+\mu_n^{u_0}=\widetilde a_k\mu_k-\widetilde a_j\mu_j-\widetilde a_m\mu_m+\widetilde a_n\mu_n+u_0f_{k,j,m,n}\\
	&+u_0(\widehat a_kB_{k,k}-\widehat a_jB_{j,j}-\widehat a_mB_{m,m}+\widehat a_nB_{n,n})=\widetilde a_k\mu_k-\widetilde a_j\mu_j\\
	&-\widetilde a_m\mu_m+\widetilde a_n\mu_n+u_0(\widehat a_kB_{k,k}-\widehat a_jB_{j,j}-\widehat a_mB_{m,m}+\widehat a_nB_{n,n})+O(u_0^2).
	\end{split}
	\end{equation*}
	Thanks to the third point of Assumptions I, there exists $U(0)$ a neighborhood of $u=0$ in $\R$ small enough such that, for each $u\in U(0)$, we have that every function $\mu_k^{u_0}-\mu_j^{u_0}-\mu_m^{u_0}+\mu_n^{u_0}$ is not constant and analytic.
	Now, $V_{(k,j,m,n)}=\{u\in D\big|\ \mu_k^{u}-\mu_j^{u}-\mu_m^{u}+\mu_n^{u}=0\}$ is a discrete subset of $D$ and
	$$V=\{u\in D\big|\ \exists ((k,j),(m,n))\in I^2: \mu_k^{u}-\mu_j^{u}-\mu_m^{u}+\mu_n^{u}=0\}$$
	is a countable subset of $D$, which achieves the proof of the first claim. The second relation is proved with the same technique. For $j,k\in\N^*$, the analytic function $u_0\rightarrow \la\ffi_j^{u_0},B\ffi_k^{u_0}\ra$ is not constantly zero since $\la\ffi_j,B\ffi_k\ra\neq 0$ and $W=\{u\in D\big|\ \exists (k,j)\in I: \la\ffi_j^{u_0},B\ffi_k^{u_0}\ra=0\}$ is a countable subset of $D$. \qedhere
\end{proof}

\begin{lemma}\label{equi}
	Let the hypotheses of Theorem $\ref{globalenergetic}$ be satisfied. Let $T>0$ and $s=d+2$ for $d$ introduced in Assumptions II. Let $c\in\R$ such that $0\not\in\sigma(A+u_0B+c,\Hi(\upvarphi))$ (the spectrum of $A+u_0B+c$ in the Hilbert space $\Hi(\upvarphi)$) and such that $A+u_0B+c$ is a positive operator. There exists a neighborhood $U(0)$ of $0$ in $\R$ such that,
	\begin{equation}\label{cazr}\begin{split}\forall u_0\in U(0),\ \  \ \ \ \ \Big\||A+u_0B+c|^\frac{s}{2}\psi\Big\|\ 
	\asymp\ \|\psi\|_{(s)},\ \ \ \ \ \ \forall\psi \in H^s_\Gi(\upvarphi).\end{split}\end{equation}\end{lemma}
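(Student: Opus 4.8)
The plan is to reduce (\ref{cazr}) to a comparison between the perturbed and the unperturbed operator, and then to a boundedness statement for the change of spectral basis. Since $A+c$ is positive with $0\notin\sigma(A+c,\Hi(\upvarphi))$, Remark \ref{normequivalence} gives $\|\psi\|_{(s)}\asymp\||A+c|^{s/2}\psi\|$ on $H^s_\Gi(\upvarphi)$, so it suffices to prove $\||A+u_0B+c|^{s/2}\psi\|\asymp\||A+c|^{s/2}\psi\|$ uniformly for $u_0\in U(0)$. Both operators are positive, self-adjoint and differ by the bounded term $u_0B$, so in the form sense $c_-(A+c)\leq A+u_0B+c\leq c_+(A+c)$ with $c_\pm=1\pm|u_0|\iii B\iii/\lambda_0\to1$, where $\lambda_0>0$ is the bottom of $\sigma(A+c,\Hi(\upvarphi))$. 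However, as $s/2\geq 3/2>1$, the map $t\mapsto t^{s/2}$ is not operator monotone, so one cannot pass from this form inequality to the corresponding inequality for the $(s/2)$-th powers; I would therefore argue spectrally.

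Since the $\ffi_k^{u_0}$ are eigenfunctions of $A+u_0B+c$ with eigenvalues $\mu_k^{u_0}+c>0$, the spectral theorem yields $\||A+u_0B+c|^{s/2}\psi\|^2=\sum_{k\in\N^*}(\mu_k^{u_0}+c)^s|\la\ffi_k^{u_0},\psi\ra|^2$, whereas $\||A+c|^{s/2}\psi\|^2=\sum_{k\in\N^*}(\mu_k+c)^s|\la\ffi_k,\psi\ra|^2$. First I would dispose of the eigenvalues: from (\ref{perturbatedeigenvalues}), together with the uniform limits $\widehat a_k,\widetilde a_k\to1$ as $|u_0|\to0$ and the bound $\mu_k+c\geq\lambda_0>0$, the ratio $(\mu_k^{u_0}+c)/(\mu_k+c)$ tends to $1$ uniformly in $k$; hence $(\mu_k^{u_0}+c)^s\asymp(\mu_k+c)^s$ uniformly for small $u_0$, and it remains to compare the two weighted sums carrying the common weight $(\mu_k+c)^s$.

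Writing $U_{u_0}\colon\ffi_k\mapsto\ffi_k^{u_0}$ for the unitary of $\Hi(\upvarphi)$ sending the unperturbed eigenbasis onto the perturbed one, one has $\sum_k(\mu_k+c)^s|\la\ffi_k^{u_0},\psi\ra|^2=\||A+c|^{s/2}U_{u_0}^*\psi\|^2\asymp\|U_{u_0}^*\psi\|_{(s)}^2$, so the claim reduces to the bicontinuity of $U_{u_0}$ on $H^s_\Gi(\upvarphi)$ with norms uniform in $u_0\in U(0)$. I would split $U_{u_0}=V_{u_0}+K_{u_0}$ according to the decomposition $\ffi_k^{u_0}=a_k\ffi_k+\sum_{j\in E_{n(k)}\setminus\{k\}}\beta_j^k\ffi_j+\eta_k$ from the proof of Lemma \ref{chain}, where $V_{u_0}\colon\ffi_k\mapsto a_k\ffi_k+\sum_{j\in E_{n(k)}\setminus\{k\}}\beta_j^k\ffi_j$ acts block-diagonally on the spectral clusters $E_m$ and $K_{u_0}\colon\ffi_k\mapsto\eta_k$ collects the off-block leakage. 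The block part is harmless: by (\ref{weakspectralgap}) the clusters have uniformly bounded cardinality and, since $\mu_j/\mu_k\to1$ for $j,k$ in a common cluster, the weight $(\mu_k+c)^s$ is constant on each $E_m$ up to a factor tending to $1$; as the restriction of $V_{u_0}$ to each $E_m$ is a matrix of bounded size converging to the identity as $u_0\to0$, $V_{u_0}$ is invertible on $H^s_\Gi(\upvarphi)$ with bounds uniform in $u_0$.

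The hard part is the leakage operator $K_{u_0}$: I must show $\iii K_{u_0}\iii_{L(H^s_\Gi(\upvarphi))}\to0$ as $u_0\to0$, which exhibits $U_{u_0}$ as a uniformly small perturbation of the invertible $V_{u_0}$ and closes the argument. The bound $\|\eta_k\|\leq C|u_0|$ is not enough against the growing weight $(\mu_k+c)^s$; what is needed is an $H^s_\Gi(\upvarphi)$-estimate on $\eta_k$, uniform in $k$ and of order $u_0$. For this I would return to the representation (\ref{normeta}) and upgrade Lemma \ref{bound}: since $\mu_k^{u_0}$ is separated from $\sigma(A+u_0B)$ by the spectral gap, the resolvent $((A+u_0P_k^{\bot}B-\mu_k^{u_0})P_k^{\bot})^{-1}$ gains two derivatives and maps the source $u_0P_k^{\bot}(a_kB\ffi_k+\sum_j\beta_j^kB\ffi_j)$—which lies in a fixed high-regularity space, thanks to the mapping properties of $B$ in Assumptions II and to the smoothness of the $\ffi_k$—into $H^s_\Gi(\upvarphi)\cap Ran(P_k^{\bot})$ with norm $O(u_0)$ uniformly in $k$. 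Summing these estimates with the weight $(\mu_k+c)^s$ yields the operator bound on $K_{u_0}$, and combining it with the eigenvalue comparison and the bicontinuity of $V_{u_0}$ gives (\ref{cazr}) with constants uniform on a sufficiently small $U(0)$. As an alternative that avoids the weighted control of $\eta_k$, one may prove the equivalence directly at even integers $s=2m$, where $(A+u_0B+c)^m$ expands into $(A+c)^m$ plus terms of order $u_0$ controlled by $\|\psi\|_{(2m)}$ through Assumptions II, and then recover the intermediate value $s=2+d$ by complex interpolation with $u_0$-independent constants.
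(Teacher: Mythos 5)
Your main (spectral) route contains a genuine gap at precisely the step you identify as the hard one, the bound $\iii K_{u_0}\iii_{L(H^s_\Gi(\upvarphi))}\to 0$. Two of the claims you use to get it are false. First, the source $u_0P_k^{\bot}\big(a_kB\ffi_k+\sum_{j}\beta_j^kB\ffi_j\big)$ does \emph{not} lie in a fixed bounded set of a high-regularity space: Assumptions II only give $B\in L(H^{2+d}_\Gi(\upvarphi),H^{2+d})$ (via Proposition \ref{bor} and the argument of \cite[Remark 2.1]{mio1}), so its $H^{s-2}$-norm grows like $\|\ffi_k\|_{(s-2)}\asymp k^{s-2}$, not uniformly in $k$. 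Second, the resolvent $\big((A+u_0P_k^{\bot}B-\mu_k^{u_0})P_k^{\bot}\big)^{-1}$ does \emph{not} gain two derivatives with constants uniform in $k$: on the modes $\ffi_m$, $m\notin E_{n(k)}$, belonging to the cluster adjacent to $E_{n(k)}$, one divides by $|\mu_m-\mu_k^{u_0}|$, which is only of size $\delta$ from (\ref{weakspectralgap}), while $\mu_m\asymp\mu_k\to\infty$; hence its norm in $L(H^{s-2}_\Gi(\upvarphi),H^{s}_\Gi(\upvarphi))$ grows like $\mu_k$. Combining the two corrected estimates gives at best $\|\eta_k\|_{(s)}\lesssim |u_0|\,k^{s}$, which is the right order termwise, but then the final step \virgolette{summing these estimates yields the operator bound on $K_{u_0}$} fails: for $K_{u_0}\psi=\sum_k\la\ffi_k,\psi\ra\eta_k$ the triangle inequality only controls $\sum_k k^s|\la\ffi_k,\psi\ra|$, an $\ell^1$-type quantity not dominated by $\|\psi\|_{(s)}$. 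Closing this would require quantitative off-diagonal decay of $\la\ffi_m,B\ffi_k\ra$ (a Schur-test or almost-orthogonality argument), which Assumptions I--II do not furnish and which your proposal does not supply. The eigenvalue-comparison step and the treatment of the block part $V_{u_0}$ are fine, but they do not rescue the leakage estimate.

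Note that the one-sentence \virgolette{alternative} you offer at the end is, in essence, the paper's actual proof: it proves $\|(A+u_0B)^{s/2}\psi\|\leq C\|A^{s/2}\psi\|$ for integer $s/2$ by expanding the operator power iteratively, using only that $B$ is bounded on $\Hi(\upvarphi)$, on $H^{2}_\Gi(\upvarphi)$ and on $H^{d_1}_\Gi(\upvarphi)$ (exactly what Assumptions II deliver through Proposition \ref{bor} and \cite[Remark 2.1]{mio1}), obtains the reverse inequality by writing $A=(A+u_0B)-u_0B$, handles non-integer $s$ as in \cite[Lemma B.6]{mio1}, and concludes with Remark \ref{normequivalence}. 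That route avoids the perturbed eigenbasis entirely, so neither the $\eta_k$ estimate nor any interpolation in $u_0$-dependent scales is needed; but in your proposal it is only a sketch, and the substantive work there is the case-by-case verification of which spaces $B$ stabilizes under each point of Assumptions II, which you do not carry out. As written, your primary argument does not go through.
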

\begin{proof}
	Let $D$ be the neighborhood provided by Lemma \ref{chain}. The proof follows the one of \cite[Lemma\ B.6]{mio1}. We suppose that $0\not\in\sigma(A+u_0B,\Hi(\upvarphi))$ and $A+u_0B$ is positive such that we can assume $c=0$. If $c\neq 0$, then the proof follows from the same arguments.

	\smallskip
	
	Thanks to Remark \ref{normequivalence}, we have $\|\cdot\|_{(s)}\asymp \||A|^\frac{s}{2}\cdot\|$ in $H^s_\Gi(\upvarphi)$. We prove the existence of $C_1,C_2,C_3>0$ such that, for every $\psi\in H^s_\Gi(\upvarphi)$,
	\begin{equation}\label{upperboundnormAuB}\begin{split}
	\|(A+u_0B)^\frac{s}{2}\psi\|\leq C_1\| A^\frac{s}{2}\psi\|+C_2\|\psi\|\leq C_3\| A^\frac{s}{2}\psi\|.\\
	\end{split}\end{equation}
	Let $s/2=k\in\N^*$. 
	The relation $(\ref{upperboundnormAuB})$ is proved by iterative argument. First, it is true for $k=1$ when $B\in L(H^2_\Gi(\upvarphi))$ as there exists $C>0$ such that $\| AB\psi\|\leq C\iii B\iii_{L(H^2_\Gi(\upvarphi))}\| A\psi\|$ for $\psi\in H^2_\Gi(\upvarphi)$. When $k=2$ if $B\in L(\Hi(\upvarphi))$ and $B\in L(H^{2 k_1}_\Gi(\upvarphi))$ for $1\leq k_1\leq 2$, then there exist $C_4,C_5>0$ such that, for $\psi\in H^4_\Gi(\upvarphi)$,
	\begin{equation*}\begin{split}&\|(A+u_0B)^2\psi\|\leq \| A^2\psi\|+|u_0|^2\|B^2\psi\|+|u_0|\|AB\psi\|+|u_0|\|BA\psi\|\\
	&\leq \| A^2\psi\|+|u_0|^2\iii B^2\iii_{L(\Hi(\upvarphi))} \|\psi\|+C_4|u_0|\iii B\iii_{L(H^{2 k_1}_\Gi(\upvarphi))}\|\psi\|_{(k_1)}+
	\\
	&|u_0|\iii B\iii_{L(\Hi(\upvarphi))}\|\psi\|_{(2)}
	\end{split}\end{equation*}
	and $\|(A+u_0B)^2\psi\|\leq C_5\| A^2\psi\|$. Second, we assume $(\ref{upperboundnormAuB})$ be valid for $k\in\N^*$ when $B\in L(H^{2 k_j}_\Gi(\upvarphi))$ for $k-j-1\leq k_j\leq k-j$ and for every $j\in\{0,..., k-1\}$. We prove $(\ref{upperboundnormAuB})$ for $k+1$ when $B\in L(H^{2 k_j}_\Gi(\upvarphi))$ for $k-j\leq k_j\leq k-j+1$ and for every $j\in\{0,..., k\}$. Now, there exists $C>0$ such that $\| A^{k}B\psi\|\leq C\iii B\iii_{L(H^{2 k_0}_\Gi(\upvarphi)))}\| A^{k_0}\psi\|$ for every $\psi\in H^{2 (k+1)}_\Gi(\upvarphi)$. Thus, as $\|(A+u_0B)^{k+1}\psi\|=\|(A+u_0B)^k(A+u_0B)\psi\|$, there exist $C_6, C_7>0$ such that, for every $\psi\in H^{2 (k+1)}_\Gi(\upvarphi)$,
	\begin{equation*}
	\begin{split}
	&\|(A+u_0B)^{k+1}\psi\|\leq C_6(\| A^{k+1}\psi\|+|u_0|\| A^{k}B\psi\|+\|A\psi\|+|u_0|\|B\psi\|)\leq C_{7}\| A^{k+1}\psi\|.\\
	\end{split}\end{equation*}
	As in the proof of \cite[Lemma\ B.6]{mio1}, the relation (\ref{upperboundnormAuB}) is valid for any $s\leq k$ when $B\in L(H^{2 k_0}_\Gi(\upvarphi))$ for $k-1\leq k_0\leq s$ and $B\in L(H^{2 k_j}_\Gi(\upvarphi))$ for $k-j-1\leq k_j\leq k-j$ and for every $j\in\{1,..., k-1\}$. The opposite inequality follows by decomposing $$A=(A+u_0B)-u_0B.$$
	
	\smallskip
	
	In our framework, Assumptions II ensure that the parameter $s$ is equal to $2+d$.  
	
	\noindent
	If the second point of Assumptions II is verified for $s\in[4,11/2)$, then $B$ preserves $H_{\NN\KK}^{d_1}(\upvarphi)$ and $H^{2}_{\Gi}(\upvarphi)$ for $d_1$ introduced in Assumptions II. Proposition \ref{bor} claims that $B:H^{d_1}_{\Gi}(\upvarphi)\rightarrow H^{d_1}_\Gi(\upvarphi)$ and the argument of \cite[Remark\ 2.1]{mio1} implies $B\in L(H^{d_1}_{\Gi}(\upvarphi))$ (also $B\in L(\Hi(\upvarphi))$ as $B:\Hi(\upvarphi)\longrightarrow\Hi(\upvarphi)$). Thus, the identity (\ref{cazr}) is valid because $B\in L(\Hi(\upvarphi))$, $B\in L(H^{2}_{\Gi}(\upvarphi))$ and $B\in L(H^{d_1}_{\Gi}(\upvarphi))$ with $d_1>s-2$.
	If the third point of Assumptions II is verified for $s\in[4,9/2)$, then $B\in L(\Hi(\upvarphi))$, $B\in L(H^{2}_{\Gi}(\upvarphi))$ and $B\in L(H^{d_1}_{\Gi}(\upvarphi))$ for $d_1\in[d,9,2)$. The claim follows thanks to Proposition \ref{bor} since $B$ stabilizes $H^{d_1}$ and $H^{2}_{\Gi}(\upvarphi)$ for $d_1$ introduced in Assumptions II.
	If $s<4$ instead, then the conditions $B\in L(\Hi(\upvarphi))$ and $B\in L(H^2_{\Gi}(\upvarphi))$ are sufficient to guarantee (\ref{cazr}).\qedhere
\end{proof}

\begin{osss}\label{nonna}
	The techniques developed in the proof of Lemma $\ref{equi}$ imply the following claim. Let the hypotheses of Theorem $\ref{globalenergetic}$ be satisfied and $0< s_1<d+2$ for $d$ introduced in Assumptions II. Let $c\in\R$ such that $0\not\in\sigma(A+u_0B+c,\Hi(\upvarphi))$ and such that $A+u_0B+c$ is a positive operator. We have There exists a neighborhood $U(0)\subset\R$ of $0$ so that, for any $u_0\in U(0)$, we have $$\||A+u_0B+c|^\frac{s_1}{2}\psi\|\ 
	\asymp\ \|\psi\|_{(s_1)},\ \ \ \  \ \forall\psi \in H^{s_1}_\Gi(\upvarphi).$$\end{osss}

\section{Global approximate controllability }\label{approximatecon}
Let us consider the notation introduced in Section $\ref{preli}$.

\begin{defi}
	The \eqref{mainx1} is said to be globally approximately controllable in $H_{\Gi}^{s}(\upvarphi)$ with $s>0$ when, for every $\psi\in H^{s}_{\Gi}(\upvarphi)$, $\widehat\G\in U(\Hi(\upvarphi))$ such that $\widehat\G\psi\in H^{s}_{\Gi}(\upvarphi)$ and $\epsilon>0$, there exist $T>0$ and $u\in L^2((0,T),\R)$ such that $\|\widehat\G\psi-\G^u_T\psi\|_{(s)}<\epsilon$.
\end{defi}
\begin{prop}\label{approx}
Let $(A,B)$ satisfy Assumptions I$(\upvarphi,\eta)$ and Assumptions II$(\upvarphi,\eta,\tilde d)$ for $\eta>0$ and $\widetilde d\geq 0$. The (\ref{mainx1}) is globally approximately controllable in $H^{s}_{\Gi}(\upvarphi)$ for $s=2+d$ with $d$ from Assumptions II$(\upvarphi,\eta,\tilde d)$ .
\end{prop}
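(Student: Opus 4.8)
The plan is to exploit the analytic-perturbation machinery of Appendix \ref{analitics} to place the system in a non-resonant, fully coupled spectral configuration, and then to invoke the approximate-controllability results developed for compact graphs in \cite{mio4} (themselves adapting the Lie--Galerkin arguments of \cite{ugo, nabile}). First I would write the control as $u=u_0+u_1$ with $u_0\in\R$ and $u_1\in L^2((0,T),\R)$, so that $A+u(t)B=A_{u_0}+u_1(t)B$ with $A_{u_0}:=A+u_0B$. By Lemma \ref{chain} there exist a neighbourhood $U(0)$ of $0$ and a countable set $Q$ such that, for every $u_0\in U(0)\setminus Q$, the self-adjoint operator $A_{u_0}$ admits on $\Hi(\upvarphi)$ an orthonormal eigenbasis $\{\ffi_k^{u_0}\}_{k\in\N^*}$, with eigenvalues $\{\mu_k^{u_0}\}_{k\in\N^*}$ whose gaps are non-degenerate, $\mu_k^{u_0}-\mu_j^{u_0}-\mu_m^{u_0}+\mu_n^{u_0}\neq 0$ for all distinct admissible index pairs, and with $\la\ffi_k^{u_0},B\ffi_j^{u_0}\ra\neq 0$ for every $j\neq k$. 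I would fix one such $u_0$ once and for all; these are exactly the spectral non-resonance and the connectedness of the coupling graph demanded by the control scheme.

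Since, in addition, Assumptions II$(\upvarphi,\eta,\tilde d)$ guarantee that $B$ stabilizes the relevant regularity spaces and that the flow of $A_{u_0}+u_1B$ is well posed in $H^{2+d}_\Gi(\upvarphi)$ by Proposition \ref{laura}, the hypotheses of the approximate-controllability theorem of \cite{mio4} hold for the pair $(A_{u_0},B)$ on $\Hi(\upvarphi)$. The Lie--Galerkin output is that the set of propagators $\G^{u_1}_T$ is dense, in the strong topology, in the unitary group $U(\Hi(\upvarphi))$; applied to any $\psi\in H^{2+d}_\Gi(\upvarphi)$ and any $\widehat\G\in U(\Hi(\upvarphi))$ with $\widehat\G\psi\in H^{2+d}_\Gi(\upvarphi)$, it produces, for each $\epsilon>0$, a time $T>0$ and a control steering $\psi$ into an $\epsilon$-approximation of $\widehat\G\psi$. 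Re-absorbing $u_0$ into $u=u_0+u_1$ returns an admissible control for (\ref{mainx1}).

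The delicate point is that the approximation must hold in the strong norm $\|\cdot\|_{(s)}$ with $s=2+d$, not merely in $L^2$. Here I would use Lemma \ref{equi} and Remark \ref{nonna}, which give $\|\cdot\|_{(s_1)}\asymp\||A_{u_0}+c|^{s_1/2}\cdot\|$ on $H^{s_1}_\Gi(\upvarphi)$ for every $0<s_1\leq s$ and a suitable $c$, so that the graph norm of $A_{u_0}$ and the ambient norm are comparable uniformly in $u_0\in U(0)$. The tracking argument then operates directly in the basis $\{\ffi_k^{u_0}\}$: one truncates both $\psi$ and $\widehat\G\psi$ at a level $N$ so large that their tails are $\epsilon$-small in $\|\cdot\|_{(s)}$ (possible since both lie in $H^s_\Gi(\upvarphi)$), steers the finite-dimensional block by the rank condition afforded by the non-degenerate gaps and the nonvanishing couplings, and bounds the leakage into high modes through the uniform well-posedness estimate of Proposition \ref{laura}, which keeps the trajectory bounded in some $H^{s'}_\Gi(\upvarphi)$ with $s'>s$. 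An interpolation inequality relating $\|\cdot\|_{(s)}$, $\|\cdot\|$ and $\|\cdot\|_{(s')}$ then converts the $L^2$-smallness of the leakage into $\|\cdot\|_{(s)}$-smallness.

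I expect the principal obstacle to be precisely this upgrade from the $L^2$ topology to the $H^{2+d}$ topology: controlling the high-frequency tail uniformly in the control while preserving the strong-norm bounds is where the weak spectral gap (\ref{weakgap}) and the regularity hypotheses of Assumptions II are genuinely used, and it is the reason the perturbation apparatus of Lemmas \ref{bound}--\ref{equi} was assembled. Once these estimates are in place, the remainder of the proof is a verification that Lemma \ref{chain} and Assumptions II reduce the statement to the cited approximate-controllability theorem.
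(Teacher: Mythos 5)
Your overall architecture matches the paper's: decompose $u=u_0+u_1$, use Lemma \ref{chain} to make the spectrum of $A+u_0B$ non-resonant and the couplings non-vanishing (this is exactly the paper's point {\bf 2)}), obtain $L^2$-approximate controllability from a finite-dimensional/Lie--Galerkin argument, and upgrade to the $\|\cdot\|_{(s)}$-topology by interpolation, with Lemma \ref{equi} and Remark \ref{nonna} transferring norms between $A$ and $A+u_0B$. The problem lies in the step you yourself flag as delicate, and it is a genuine gap, not a presentational one.

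You propose to bound the high-mode leakage \virgolette{through the uniform well-posedness estimate of Proposition \ref{laura}, which keeps the trajectory bounded in some $H^{s'}_\Gi(\upvarphi)$ with $s'>s$}. Proposition \ref{laura} provides neither of the two things this requires. First, its constant is $C=C(T,B,u)$: it depends on the time horizon and on the control (through $\|u\|_{L^2}$, via the fixed-point/partition argument in its proof), and in any Chambrion-type or Lie--Galerkin construction the times $T_\epsilon$ and controls $u_\epsilon$ realizing an $\epsilon$-approximation in $L^2$ degenerate (typically $T_\epsilon\to\infty$) as $\epsilon\to 0$. So the bound is not uniform along the approximating family, and the interpolation inequality then yields nothing: $L^2$-smallness times a possibly blowing-up higher-order norm is not small. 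Second, Proposition \ref{laura} bounds the trajectory only in $H^{2+d}_\Gi(\upvarphi)$, i.e. in the target norm $s=2+d$ itself, not in a strictly stronger space $H^{s'}_\Gi(\upvarphi)$ with $s'>s$, which is what an interpolation between $\|\cdot\|$ and $\|\cdot\|_{(s')}$ needs.

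The paper fills this hole with two ingredients absent from your proposal: (i) the approximating controls are constructed explicitly via Chambrion's periodic excitations \cite{chambrion2}, and along the whole family they satisfy the uniform bounds (\ref{sorde1})--(\ref{casino}) on $\|u\|_{BV(T)}$, $\|u\|_{L^\infty((0,T),\R)}$ and $T\|u\|_{L^\infty((0,T),\R)}$; this is precisely why the paper cannot quote the $L^2$-density of propagators as a black box from \cite{mio4} or \cite{nabile}, but re-runs the finite-dimensional construction (points {\bf 1)(a)}--{\bf 1)(e)}) so as to track these three quantities; (ii) Kato's propagation of regularity \cite{kato1}, which gives $\|\G_T^u\psi\|_{(s_1+2)}\leq C(K)\|\psi\|_{(s_1+2)}$ with $C(K)$ depending \emph{only} on those three quantities, hence uniform as $\epsilon\to 0$, together with the elementary interpolation (\ref{diid2}) between $\|\cdot\|$ and $\|\cdot\|_{(s_1+2)}$ for $s<s_1+2$. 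With Proposition \ref{laura} in place of (i)+(ii), your argument does not close; with them, it becomes the paper's proof.
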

\begin{proof}
In the point {\bf 1)\,\,}of the proof, we suppose that $(A,B)$ admits a non-degenerate chain of connectedness (see \cite[Definition\ 3]{nabile}). We treat the general case in the point {\bf 2)\,\,}.

\smallskip

\needspace{3\baselineskip}

\noindent
{\bf 1) (a) Preliminaries.} Let $\pi_m$ be the orthogonal projector $\pi_m:\Hi\rightarrow \Hi_m:=span\overline{\{\varphi_j\ :\ j\leq m\}}^{\ L^2}$ for every $m\in\N^*.$
Up to reordering of $\{\varphi_k\}_{k\in\N^*}$, the couples $(\pi_{m}A\pi_{m},\pi_{m}B\pi_{m})$ for $m\in\N^*$ admit non-degenerate chains of connectedness in $\Hi_{m}$. Let $\|\cdot\|_{BV(T)}=\|\cdot\|_{BV((0,T),\R)}$ and $\iii\cdot\iii_{(s)}:=\iii\cdot\iii_{L(H^s_{\Gi}(\upvarphi),H^s_{\Gi}(\upvarphi))}$ for $s>0.$ 
\begin{itemize}
	\item[]	 {\bf Claim.} $\forall\ \widehat\G\in U(\Hi(\upvarphi)),\ \forall \epsilon>0,\ \exists N_1\in\N^*,\ \widetilde\G_{N_1}\in U(\Hi(\upvarphi))\ :\ \pi_{N_1}\widetilde\G_{N_1}\pi_{N_1}\in SU(\Hi_{N_1}),$
	\begin{equation}\label{grammo}\|\widetilde\G_{N_1}\varphi_1-\widehat\Gamma\varphi_1\|<\epsilon.\end{equation}
\end{itemize}
Let $N_1\in\N^*$ and $\widetilde\varphi_1:=\|\pi_{N_1}\widehat\G\varphi_1\|^{-1}\pi_{N_1}\widehat\G\varphi_1$. We define $(\widetilde\varphi_j)_{2\leq j\leq N_1}$ such that $(\widetilde\varphi_j)_{j\leq N_1}$ is an orthonormal basis of $\Hi_{N_1}$. The operator $\widetilde\G_{N_1}$ is the unitary map such that $\widetilde\G_{N_1}\varphi_j=\widetilde\varphi_j$ for every $j\leq N_1.$	The provided definition implies $\lim_{N_1\rightarrow\infty}\|\widetilde\G_{N_1}\varphi_1-\widehat\Gamma\varphi_1\|=0$. Thus, for every $\epsilon>0$, there exists $N_1\in\N^*$ large enough satisfying the claim.

\smallskip
\needspace{3\baselineskip}

\noindent
{\bf 1) (b)  Finite dimensional controllability.} Let $T_{ad}$ be the set of $(j,k)\in\{1,...,N_1\}^2$ such that $B_{j,k}:=\la\varphi_j,B\varphi_k\ra\neq 0$ and $|\lambda_j-\lambda_k|=|\lambda_m-\lambda_l|$ with $m,l\in\N^*$ implies $\{j,k\}= \{m,l\}$ for $B_{m,l}=0$.
For every $(j,k)\in\{1,...,N_1\}^2$ and $\theta\in[0,2\pi)$, we define $E_{j,k}^{\theta}$ the $N_1\times N_1$ matrix with elements $(E_{j,k}^\theta)_{l,m}=0$, $(E_{j,k}^\theta)_{j,k}=e^{i\theta}$ and $(E_{j,k}^\theta)_{k,j}=-e^{-i\theta}$ for $(l,m)\in\{1,...,N_1\}^2\setminus\{(j,k),(k,j)\}.$ Let $E_{ad}=\big\{E_{j,k}^{\theta}\ :\ (j,k)\in T_{ad},\ \theta\in[0,2\pi)\big\}$ 
and $Lie(E_{ad})$. Fixed $v$ a piecewise constant control taking value in $E_{ad}$ and $\tau>0$, we introduce the control system on $SU(\Hi_{N_1})$
\begin{equation}\label{formulapprox3}\begin{split}\begin{cases}
\dot{x}(t)=x(t)v(t),\ \ \ \ \ \ t\in(0,\tau),\\
x(0)=Id_{SU(\Hi_{N_1})}.\\
\end{cases}\end{split}\end{equation}

\begin{itemize}
	\item[] {\bf Claim.} $(\ref{formulapprox3})$ is controllable, {\it i.e.} for $R\in SU(\Hi_{N_1})$, there exist $p\in\N^*$, $M_1,...,M_p\in E_{ad}$, $\alpha_1,...,\alpha_p\in\R^+$ such that $R=e^{\alpha_1 M_1}\circ...\circ e^{\alpha_p M_p}.$
\end{itemize}
For every $(j,k)\in\{1,...,N_1\}^2$, we define the $N_1\times N_1$ matrices $R_{j,k}$, $C_{j,k}$ and $D_{j}$ as follow. For $(l,m)\in\{1,...,N_1\}^2\setminus\{(j,k),(k,j)\},$ we have $(R_{j,k})_{l,m}=0$ and $(R_{j,k})_{j,k}=-(R_{j,k})_{k,j}=1,$ while $(C_{j,k})_{l,m}=0$ and $(C_{j,k})_{j,k}=(C_{j,k})_{k,j}=i.$ Moreover, for $(l,m)\in\{1,...,N_1\}^2\setminus\{(1,1),(j,j)\},$ $(D_{j})_{l,m}=0$ and $(D_{j})_{1,1}=-(D_{j})_{j,j}=i.$
We consider the basis of $su(\Hi_{N_1})$ $${\bf e}:=\{R_{j,k}\}_{j,k\leq N_1}\cup\{C_{j,k}\}_{j,k\leq N_1}\cup\{D_{j}\}_{j\leq N_1}.$$ Thanks to \cite[Theorem\ 6.1]{sac}, the controllability of (\ref{formulapprox3}) is equivalent to prove that $Lie(E_{ad})\supseteq su(\Hi_{N_1})$ for $su(\Hi_{N_1})$ the Lie algebra of $SU(\Hi_{N_1}).$ The claim si valid as it is possible to obtain the matrices  $R_{j,k}$, $C_{j,k}$ and $D_j$ for every $j,k\leq N_1$ by iterated Lie brackets of elements in $E_{ad}$.

	\smallskip

	\needspace{3\baselineskip}
	
	\noindent
	{\bf 1) (c) Finite dimensional estimates.} Let $\widehat\G\in U(\Hi(\upvarphi))$ and $ \widetilde\G_{N_1}\in U(\Hi(\upvarphi))$ be defined in {\bf 1) (a)}. Thanks to the previous claim and to the fact that $\pi_{N_1}\widetilde\G_{N_1}\pi_{N_1}\in SU(\Hi_{N_1})$, there exist $p\in\N^*$, $M_1,...,M_p\in E_{ad}$ and $\alpha_1,...,\alpha_p\in\R^+$ such that
	\begin{equation}\label{dexk}\pi_{N_1}\widetilde\G_{N_1}\pi_{N_1}=e^{\alpha_1 M_1}\circ...\circ e^{\alpha_p M_p}.\end{equation}
	
	\begin{itemize}\item[] {\bf Claim.} For every $l\leq p$ and $e^{\alpha_l M_l}$ from $(\ref{dexk})$, there exist $\{T_n^l\}_{l\in\N^*}\subset\R^+$ and $\{u_n^l\}_{n\in\N^*}$ such that $u_n^l\in L^2((0,T_n^l),\R)$ for every $n\in\N^*$ and
		\begin{equation}\label{sorde}\lim_{n\rightarrow\infty}\|\G_{T_n^l}^{u_n^l}\varphi_k-e^{\alpha_l M_l}\varphi_k\|=0,\ \ \ \  \ \ \forall k\leq N_1, \end{equation}
		\begin{equation}\label{sorde1}\begin{split}\sup_{n\in\N^*}\|u_n^l&
		\|_{BV(T_n^l)}<\infty,  \ \ \  \ \ \ \ \ \sup_{n\in\N^*}\|u_n^l\|_{L^\infty((0,T_n^l),\R)}<\infty,\\ 
		&\sup_{n\in\N^*} T_n^l\|u_n^l\|_{L^\infty((0,T_n^l),\R)}<\infty.\end{split}\end{equation}\end{itemize}
	We consider the results developed in \cite[Section\ 3.1\ \&\ Section\ 3.2]{chambrion2} by Chambrion and leading to \cite[Proposition\ 6]{chambrion2} since $(A,B)$ admits a non-degenerate chain of connectedness (\cite[Definition\ 3]{nabile}). Each $e^{\alpha_l M_l}$ is a rotation in a two dimensional space for every $l\in\{1,...,p\}$ and this work allows to explicit $\{T_n^l\}_{n\in\N^*}\subset\R^+$ and $\{u_n^l\}_{n\in\N^*}$ satisfying $(\ref{sorde1})$ such that $u_n^l\in L^2((0,T_n^l),\R)$ for every $n\in\N^*$ and \begin{equation}\label{diid}\lim_{n\rightarrow\infty}\|\pi_{N_1}\G_{T_n^l}^{u_n^l}\varphi_k-e^{\alpha_l M_l}\varphi_k\|=0,\ \ \ \ \ \ \forall k\leq N_1.\end{equation}
	As $e^{\alpha_l M_l}\in SU(\Hi_{N_1})$,  we have $\lim_{n\rightarrow\infty}\|\G_{T_n^l}^{u_n^l}\varphi_k-e^{\alpha_l M_l}\varphi_k\|=0$ for $k\leq N_1.$

	\smallskip

	\needspace{3\baselineskip}
	
	\noindent
	{\bf 1) (d) Infinite dimensional estimates.} 
	
	\begin{itemize}
		\item[] {\bf Claim.} Let $\widehat\G\in U(\Hi(\upvarphi))$. There exist $K_1,K_2,K_3>0$ such that for every $\epsilon>0$, there exist $T>0$ and $u\in L^2((0,T),\R)$ such that $\|\G_{T}^{u}\varphi_1-\widehat\G\varphi_1\|\leq\epsilon$ and
		\begin{align}\label{casino}\|u\|_{BV(T)}\leq K_1,  \ \  \ \ \ \ \ \|u\|_{L^\infty((0,T),\R)}\leq K_2,\ \  \ \ \ \ \ T\|u\|_{L^\infty((0,T),\R)}\leq K_3.\end{align}
	\end{itemize}
	
	Let {\bf 1) (c)} be valid with $p=2$. Although, the following result is valid for any $p\in\N^*$. There exists $2\leq l\leq N_1$ such that $e^{\alpha_1 M_1}\varphi_1=\varphi_l$. Thanks to $(\ref{sorde})$, there exists $n\in\N^*$ large enough such that,
	\begin{equation*}\begin{split}&\| \G_{T_n^2}^{u_n^2}\G_{T_n^1}^{u_n^1}\varphi_1-e^{\alpha_2 M_2} e^{\alpha_1 M_1}\varphi_1\|
\leq \iii \G_{T_n^2}^{u_n^2}\iii\|\G_{T_n^1}^{u_n^1}\varphi_1-e^{\alpha_1M_1}\varphi_1\|+\|\G_{T_n^2}^{u_n^2}\varphi_l-e^{\alpha_2M_2}\varphi_l\|\leq \epsilon.\end{split}\end{equation*}
The identity $(\ref{dexk})$ leads to the existence of $K_1,K_2,K_3>0$ such that for every $\epsilon>0$, there exist $T>0$ and $u\in L^2((0,T),\R)$ such that $\|\G_{T}^{u}\varphi_1-\widetilde\G_{N_1}\varphi_1\|<\epsilon$ and \begin{equation}\label{sorde2}\begin{split}\|u&\|_{BV(T)}\leq K_1,   \ \ \ \ \ \|u\|_{L^\infty((0,T),\R)}\leq K_2,\  \ \ \ \ T\|u\|_{L^\infty((0,T),\R)}\leq K_3.\end{split}\end{equation} The relation $(\ref{grammo})$ and the triangular inequality achieve the claim.

	\smallskip
	\noindent
	{\bf 1) (e) Global approximate controllability with respect to the $L^2$-norm.} Let $\psi\in \Hi(\upvarphi)$ and $\widehat \G\in U(\Hi(\upvarphi))$.

	\begin{itemize}
		\item[] {\bf Claim.} There exist $K_1,K_2,K_3>0$ such that for every $\epsilon>0$, there exist $T>0$ and $u\in L^2((0,T),\R)$ such that $\|\G_{T}^{u}\psi-\widehat\G\psi_k\|\leq\epsilon$ and
		\begin{align}\label{casino}\|u\|_{BV(T)}\leq K_1,  \ \  \ \ \ \ \ \|u\|_{L^\infty((0,T),\R)}\leq K_2,\ \  \ \ \ \ \ T\|u\|_{L^\infty((0,T),\R)}\leq K_3.\end{align}
	\end{itemize}

	We assume that $\|\psi \|=1$, but the same proof is also valid for the generic case.
	From the point {\bf 1) (d)}, there exist two controls respectively steering $\varphi_1$ close to $\psi$ and $\varphi_1$ close to $\widehat\G\psi$. Vice versa, thanks to the time reversibility, there exists a control steering $\psi$ close to $\varphi_1$. In other words, there exist $T_1, T_2>0$, $u_1\in L^2((0,T_1),\R)$ and $u_2\in L^2((0,T_2),\R)$ such that
	$$\|\G_{T_1}^{u_1}\psi-\varphi_1\|\leq{\epsilon},\ \ \ \  \ \ \ \ \ \|\G_{T_2}^{u_2}\varphi_1-\widehat\G\psi\|\leq{\epsilon}.$$
	The chosen controls $u_1$ and $u_2$ satisfy \eqref{casino}. The claim is proved as
	\begin{align*}\|\G_{T_2}^{u_2}\G_{T_1}^{u_1}\psi-\widehat\G\psi\|&\leq \|\G_{T_2}^{u_2}\G_{T_1}^{u_1}\psi-\G_{T_2}^{u_2}\varphi_1\|+\|\G_{T_2}^{u_2}\varphi_1-\widehat\G\psi\|
	\leq 2{\epsilon}.\end{align*}

	\noindent
	{\bf 1) (f) Global approximate controllability in higher regularity norm.} Let $\psi\in H^{s}_{\Gi}(\upvarphi)$ with $s\in[s_1,s_1+2)$ and $s_1\in\N^*$. Let $\widehat \G\in U(\Hi(\upvarphi))$ be such that $\widehat \G\psi\in H^{s}_{\Gi}(\upvarphi)$ and $B:H^{s_1}_{\Gi}(\upvarphi)\longrightarrow H^{s_1}_{\Gi}(\upvarphi)$.
	\begin{itemize}
		\item[] {\bf Claim.} There exist $T>0$ and $u\in L^2((0,T),\R)$ such that $\|\G_{T}^{u}\psi-\widehat\G\psi\|_{(s)}\leq\epsilon$.
	\end{itemize}
	
	We consider the propagation of regularity developed by Kato in \cite{kato1}. We notice that $i(A+u(t)B-ic)$ is maximal dissipative in $H^{s_1}_{\Gi}(\upvarphi)$ for suitable $c:=\|u\|_{L^\infty((0,T),\R)}\iii B\iii_{(s_1)}$. Let $\lambda>c$ and $\widehat H^{s_1+2}_{\Gi}(\upvarphi):=D(A^\frac{s_1}{2}(i\lambda-A))\cap\Hi(\upvarphi)\equiv  H^{s_1+2}_{\Gi}(\upvarphi)$. We know that $B:\widehat H^{s_1+2}_{\Gi}(\upvarphi)\subset  H^{s_1}_{\Gi}(\upvarphi)\rightarrow H^{s_1}_{\Gi}(\upvarphi)$ and the arguments of \cite[Remark\ 2.1]{mio1} imply that $B\in L(\widehat H^{s_1+2}_{\Gi}(\upvarphi),H^{s_1}_{\Gi}(\upvarphi))$. For $T>0$ and $u\in BV((0,T),\R)$, we have $$
	M:=\sup_{t\in [0,T]}\iii(i\lambda-A-u(t)B)^{-1}\iii_{L(H^{s_1}_{\Gi}(\upvarphi),\widehat H^{s_1+2}_{\Gi}(\upvarphi))}<+\infty.$$
We know $\|k+f(\cdot)\|_{BV((0,T),\R)}=\|f\|_{BV((0,T),\R)}$ for $f\in BV((0,T),\R)$ and $k\in\R$. Equivalently, $$N:=\iii i\lambda-A-u(\cdot)B\iii_{BV\big([0,T],L(\widehat H^{s_1+2}_{\Gi}(\upvarphi),H^{s_1}_{\Gi}(\upvarphi))\big)}=\|u\|_{BV(T)} \iii B\iii_{L(\widehat H^{s_1+2}_{\Gi}(\upvarphi),H^{s_1}_{\Gi}(\upvarphi))}<+\infty.$$ We call $C_1:=\iii A(A+u(T)B-i\lambda)^{-1}\iii_{(s_1)}<\infty$ and $U_t^{u}$ the propagator generated by $A+uB-ic$ such that $U_t^u\psi=e^{-ct}\G_t^u\psi$. Thanks to \cite[Section\ 3.10]{kato1}, for every $\psi\in H^{s_1+2}_{\Gi}(\upvarphi)$, it follows 
\begin{equation*}\begin{split}\|(A+u(T)B-i\lambda) U_t^u \psi\|_{(s_1)}\leq Me^{MN}\|(A-i\lambda) \psi\|_{(s_1)}\end{split}\end{equation*}
$$\Longrightarrow \ \ \ \|\G_{T}^{u} \psi\|_{(s_1+2)}\leq C_1 Me^{MN+cT}\|\psi\|_{(s_1+2)}.$$
For every $T>0$, $u\in BV((0,T),\R)$ and $\psi\in H^{s_1+2}_{\Gi}(\upvarphi)$, there exists $C=C(K)>0$ depending on $K=\big(\|u\|_{BV(T)},\|u\|_{L^\infty((0,T),\R)},T\|u\|_{L^\infty((0,T),\R)}\big)$ such that \begin{equation}\label{diid1}\|\G_{T}^{u} \psi\|_{(s_1+2)}\leq  C\|\psi\|_{(s_1+2)}.\end{equation}
Now, we notice that, for every $\psi\in H^{6}_{\Gi}(\upvarphi)$, from the Cauchy-Schwarz inequality, we have $\|A\psi\|^2
\leq\|\psi\|\|A^2\psi\|$ and there exists $C_2>0$ such that $\|A^2\psi\|^4\leq\|A\psi\|^2\|A^3\psi\|^2\leq C_2\|\psi\|\|A^3\psi\|^3$. By following the same idea, for every $\psi\in H^{s_1+2}_{\Gi}(\upvarphi)$, there exist $m_1,m_2\in\N^*$ and $C_3,C_4>0$ such that
\begin{equation}\label{diid2}\|A^\frac{s}{2}\psi\|^{m_1+m_2}\leq C_3\|\psi\|^{m_1}\|A^\frac{s_1+2}{2}\psi\|^{m_2} \ \ \ \ \ \Longrightarrow \ \ \ \ \|\psi\|_{(s)}^{m_1+m_2}\leq C_4\|\psi\|^{m_1}\|\psi\|_{(s_1+2)}^{m_2}.\end{equation}
	In conclusion, the point {\bf 1) (e)}, the relation $(\ref{diid1})$ and the relation $(\ref{diid2})$ ensure the claim.

\smallskip

\noindent
{\bf 1) (g)  Conclusion.} Let $d$ be defined in Assumptions II$(\upvarphi,\eta,\tilde d)$. 
If $d<2$, then $B:H^{2}_{\Gi}(\upvarphi)\rightarrow H^{2}_{\Gi}(\upvarphi)$ and the global approximate controllability is verified in $H^{d+2}_{\Gi}(\upvarphi)$ since $d+2<4.$ 
If $d\in [2,5/2)$, then $B:H^{d_1}\rightarrow H^{d_1}$ with $d_1\in(d,5/2)$ from Assumptions II$(\upvarphi,\eta,\tilde d)$. Now, $H^{d_1}_{\Gi}(\upvarphi)=H^{d_1}\cap H^2_{\Gi}(\upvarphi)$, thanks to Proposition \ref{bor}, and $B:H^{2}_{\Gi}(\upvarphi)\rightarrow H^{2}_{\Gi}(\upvarphi)$ implies $B:H^{d_1}_{\Gi}(\upvarphi)\rightarrow H^{d_1}_{\Gi}(\upvarphi)$. 
The global approximate controllability is verified in $H^{d+2}_{\Gi}(\upvarphi)$ since $d+2<d_1+2.$
If $d\in [5/2,7/2)$, then $B:H_{\NN\KK}^{d_1}(\upvarphi)\rightarrow H_{\NN\KK}^{d_1}(\upvarphi)$ for $d_1\in(d,7/2)$ and $H^{d_1}_{\Gi}(\upvarphi)=H^{d_1}_{\NN\KK}(\upvarphi)\cap H^2_{\Gi}(\upvarphi)$ from Proposition \ref{bor}. Now, $B:H^{2}_{\Gi}(\upvarphi)\rightarrow H^{2}_{\Gi}(\upvarphi)$ that implies $B:H^{d_1}_{\Gi}(\upvarphi)\rightarrow H^{d_1}_{\Gi}(\upvarphi)$. 
The global approximate controllability is verified in $H^{d+2}_{\Gi}(\upvarphi)$ since $d+2<d_1+2.$

\smallskip

\noindent
{\bf 2) Generalization.} Let $(A,B)$ do not admit a non-degenerate chain of connectedness and $$A+u(\cdot)B=(A+u_0B)+u_1(\cdot)B,\ \ \ \ \ \ \  \ \ \ \ \ \  \ u_0\in \R,\ \ \ \ u_1\in L^2((0,T),\R).$$ If $(A,B)$ satisfies Assumptions I and Assumptions II, then Lemma \ref{chain} and Lemma \ref{equi} (Appendix $\ref{analitics}$) are valid. We consider $u_0$ belonging to the neighborhoods provided by the two lemmas and we denote $(\ffi_k^{u_0})_{k\in\N^*}$ a Hilbert basis of $\Hi$ made by eigenfunctions of $A+u_0B$. The steps of the point {\bf 1)\,\,}can be repeated by considering the sequence $(\ffi_k^{u_0})_{k\in\N^*}$ instead of $(\ffi_k)_{k\in\N^*}$ and the spaces $D(|A+u_0B|^\frac{s_1}{2})\cap\Hi(\upvarphi)$ in substitution of $H^{s_1}_\Gi(\upvarphi)$ with $s_1>0$. The claim is equivalently proved thanks to Lemma \ref{equi}. \qedhere
\end{proof}

\begin{osss}\label{approxT}
	As Proposition $\ref{approx}$, the (\ref{mainT}) is globally approximately controllable in $H_{\Ti}^{3}(\upvarphi)$ (defined in $(\ref{HSTspaces})$). In other words, for every $\psi\in H^{3}_{\Ti}(\upvarphi)$, $\widehat\G\in U(\Hi(\upvarphi))$ such that $\widehat\G\psi\in H^{3}_{\Ti}(\upvarphi)$ and $\epsilon>0$, we have
	$$\exists T>0,\ u\in L^2((0,T),\R)\ \ \ :\ \ \ \|\widehat\G\psi_k-\G^u_T\psi_k\|_{(3)}<\epsilon.$$
	Indeed, for every $(j,k),(l,m)\in I:=\{(j,k)\in(\N^*)^2:j\neq k\}$ so that $(j,k)\neq(l,m)$ and such that $$\mu_j-\mu_k-\mu_l+\mu_m=\frac{\pi^2}{L^2}(j^2-k^2-l^2+m^2)=0,$$
	there exists $C>0$ so that, thanks to Remark $\ref{reciprocal},$ we have
	$$\la\ffi_j,B\ffi_j\ra-\la\ffi_k,B\ffi_k\ra-\la\ffi_l,B\ffi_l\ra+\la\ffi_m,B\ffi_m\ra=C(j^{-2}-k^{-2}-l^{-2}+m^{-2})\neq 0.$$
	In conclusion, the statement of Lemma $\ref{chain}$ is valid when $|u_0|$ is small enough. Thus, $(A+u_0B,B)$ admits a non-degenerate chain of connectedness. The arguments adopted in the proof of Proposition $\ref{approx}$ lead to the claim.
\end{osss}

\end{document}